\newcommand{\insfracid}{\mathcal{F}}
\newcommand{\insfracidfg}{\mathcal{F}_f}
\newcommand{\inssubmod}{\mathbf{F}}
\newcommand{\Zarmin}{\mathrm{Zar_{min}}}
\newcommand{\NoethOver}{\mathrm{NoethOver}}
\newcommand{\KrullOver}{\mathrm{KrullOver}}
\newcommand{\RegOver}{\mathrm{RegOver}}
\newcommand{\Kr}{\mathrm{Kr}}
\newcommand{\Krset}{\mathcal{K}}
\newcommand{\gen}{\mathrm{gen}}
\newcommand{\cont}{\mathbf{c}}
\newcommand{\Cl}{\mathrm{Cl}}
\DeclareMathOperator{\trdeg}{trdeg}
\DeclareMathOperator{\Spec}{Spec}
\DeclareMathOperator{\Max}{Max}
\DeclareMathOperator{\rad}{rad}
\DeclareMathOperator{\Zar}{Zar}
\DeclareMathOperator{\Over}{Over}
\newcommand{\ins}[1]{\mathbb{#1}}
\newcommand{\insN}{\ins{N}}
\newcommand{\insZ}{\ins{Z}}
\newcommand{\inN}{\in\insN}
\newtheoremstyle{mio}%
	{}{} 
	{\itshape}{} 
	{\bfseries}{.}{ } 
	{#1 #2\thmnote{~\mdseries(#3)}} 
\theoremstyle{mio}
\newtheorem{teor}{Theorem}[section]
\newtheorem{cor}[teor]{Corollary}
\newtheorem{prop}[teor]{Proposition}
\newtheorem{lemma}[teor]{Lemma}
\newtheorem{defin}[teor]{Definition}
\newtheoremstyle{definition2}%
	{}{} 
	{}{} 
	{\bfseries}{.}{ } 
	{#1 #2\thmnote{\mdseries~ #3}} 
\theoremstyle{definition2}
\newtheorem{ex}[teor]{Example}
\newtheorem{oss}[teor]{Remark}
\title[Non-compact subsets of the Zariski space]{Non-compact subsets of the Zariski space of an integral domain}
\author{Dario Spirito}
\email{spirito@mat.uniroma3.it}
\address{Dipartimento di Matematica e Fisica, Universit\`a degli Studi
``Roma Tre'', Roma, Italy}
\date{\today}
\subjclass[2010]{Primary: 13F30; Secondary: 13A15, 13A18, 13B22, 54D30}
\keywords{Zariski space; integral closure; valuation rings; semistar operations; spectral spaces; Kronecker function rings}
\thanks{This work was partially supported by {\sl GNSAGA} of {\sl Istituto Nazionale di Alta Matematica}.}
\begin{document}

\begin{abstract}
Let $V$ be a minimal valuation overring of an integral domain $D$ and let $\Zar(D)$ be the Zariski space of the valuation overrings of $D$. Starting from a result in the theory of semistar operations, we prove a criterion under which the set $\Zar(D)\setminus\{V\}$ is not compact. We then use it to prove that, in many cases, $\Zar(D)$ is not a Noetherian space, and apply it to the study of the spaces of Kronecker function rings and of Noetherian overrings.
\end{abstract}

\maketitle

\section{Introduction}
The \emph{Zariski space} $\Zar(K|D)$ of the valuation rings of a field $K$ containing a domain $D$ was introduced (under the name \emph{abstract Riemann surface}) by O. Zariski, who used it to show that resolution of singularities holds for varieties of dimension 2 or 3 over fields of characteristic 0 \cite{zariski_sing,zariski_comp}. In particular, Zariski showed that $\Zar(K|D)$, endowed with a natural topology, is always a compact space \cite[Chapter VI, Theorem 40]{zariski_samuel_II}; this result has been subsequently improved by showing that $\Zar(K|D)$ is a spectral space (in the sense of Hochster \cite{hochster_spectral}), first in the case where $K$ is the quotient field of $D$ \cite{dobbs_fedder_fontana,fontana_krr-abRs}, and then in the general case \cite[Corollary 3.6(3)]{fifolo_transactions}. The topological aspects of the Zariski space has subsequently been used, for example, in real and rigid algebraic geometry \cite{hub-kneb,schwartz-compactification} and in the study of representation of integral domains as intersections of valuation overrings \cite{olberding_noetherianspaces,olberding_affineschemes,olberding_topasp}. In the latter context, i.e., when $K$ is the quotient field of $D$, two important properties for subspaces of $\Zar(K|D)$ to investigate are the properties of compactness and of Noetherianess.

In this paper, we concentrate on the case where $K$ is the quotient field of $D$, studying subspaces of $\Zar(K|D)=\Zar(D)$ that are \emph{not} compact. The starting point is a criterion based on semistar operations, proved in \cite[Theorems 4.9 and 4.13]{fifolo_transactions} (see also \cite[Proposition 4.5]{topological-cons} for a slightly stronger version) and integrated, as in \cite[Example 3.7]{surveygraz}, with the use of the two-faced definition of the integral closure/$b$-operation, either through valuation overrings or through equations of integral dependence (see e.g. \cite[Chapter 6]{swanson_huneke}). In particular, we  analyze sets of the form $\Zar(D)\setminus\{V\}$, where $V$ is a minimal valuation overring of $D$: we show in Section \ref{sect:Vmin} that such a space is compact only if $V$ can be obtained from $D$ in a very specific way (more precisely, as the integral closure of a localization of a finitely generated algebra over $D$), and we follow up in Sections \ref{sect:dimV} and \ref{sect:intersection} by showing that this condition implies a bound on the dimension of $V$ in relation with the dimension of $D$ (Proposition \ref{prop:leq2dim}) and a quite strict condition on the intersection of sets of prime ideals of $D$ (Theorem \ref{teor:intersez-P}). Section \ref{sect:kronecker} is dedicated to a brief application of these criteria to the study of Kronecker function rings (the definition will be recalled later).

In Section \ref{sect:Noeth}, we consider the set $\Over(D)$ of overrings of $D$ (which is known to be itself a spectral space \cite[Proposition 3.5]{finocchiaro-ultrafiltri}). Using the result proved in the previous sections, we show that, when $D$ is a Noetherian domain, some distinguished subspaces of $\Over(D)$ (for example, the subspace of overrings of $D$ that are Noetherian) are not spectral.

\section{Preliminaries and notation}
\subsection{Spectral spaces}
A topological space $X$ is a \emph{spectral space} if there is a ring $R$ such that $X$ is homeomorphic to the prime spectrum $\Spec(R)$, endowed with the Zariski topology. Spectral spaces can be characterized in a purely topological way as those spaces that are $T_0$, compact, with a basis of open and compact subset that is closed by finite intersections and such that every irreducible closed subset has a generic point (i.e., it is the closure of a single point) \cite[Proposition 4]{hochster_spectral}.

On a spectral space $X$ it is possible to define two new topologies: the \emph{inverse} and the \emph{constructible} topology.

The \emph{inverse topology} is the topology on $X$ having, as a basis of closed sets, the family of open and compact subspaces of $X$. Endowed with the inverse topology, $X$ is again a spectral space \cite[Proposition 8]{hochster_spectral}; moreover, a subspace $Y\subseteq X$ is closed in the inverse topology if and only if $Y$ is compact (in the original topology) and $Y=Y^{\gen}$ \cite[Remark 2.2 and Proposition 2.6]{fifolo_transactions}, where
\begin{equation*}
\begin{array}{rcl}
Y^\gen & := & \{z\in X\mid z\leq y\text{~for some~}y\in Y\}=\\
& = & \{z\in X\mid y\in\Cl(z)\text{~for some~}y\in Y\},
\end{array}
\end{equation*}
with $\Cl(z)$ denoting the closure of the singleton $\{z\}$ (again, in the original topology) and $\leq$ is the order induced by the original topology \cite[d-1]{encytop}, which coincides on $\Spec(R)$ with the set-theoretic inclusion. 

The \emph{constructible topology} on $X$ (also called \emph{patch topology}) is the coarsest topology such that the open and compact subsets of $X$ are both open and closed. Endowed with the constructible topology, $X$ is a spectral space that is also Haussdorff (see \cite[Propositions 3 and 5]{olivier_plats_1}, \cite{olivier_plats_2} or \cite[Proposition 5]{fontana_patch}), and the constructible topology is finer than both the original and the inverse topology. A subset of $X$ closed in the constructible topology is said to be a \emph{proconstructible subset} of $X$; if $Y$ is proconstructible, then it is a spectral space when endowed with the topology induced by the original spectral topology of $X$, and the constructible topology on $Y$ is exactly the topology induced by the constructible topology on $X$ (this follows from \cite[1.9.5(vi-vii)]{EGA4-1}).

\subsection{Noetherian spaces}
A topological space $X$ is \emph{Noetherian} if $X$ verifies the ascending chain condition on the open subsets, or equivalently if every subspace of $X$ is compact. Examples of Noetherian spaces are finite spaces and the prime spectra of Noetherian rings. If $\Spec(R)$ is a Noetherian space, then every proper ideal of $R$ has only finitely many minimal primes (see e.g. the proof of \cite[Chapter 4, Corollary 3, p.102]{bourbaki_ac} or \cite[Chapter 6, Exercises 5 and 7]{atiyah}).

\subsection{Overrings and the Zariski space}
Let $D\subseteq K$ be an extension of integral domains. We denote the set of all rings contained between $D$ and $K$ by $\Over(K|D)$; if $K$ is a field (not necessarily the quotient field of $D$), the set of all valuation rings containing $D$ with quotient field $K$ is denoted by $\Zar(K|D)$, and it is called the \emph{Zariski space} (or the \emph{Zariski-Riemann space}) of $D$.

The \emph{Zariski topology} on $\Over(K|D)$ is the topology having, as a subbasis, the sets of the form
\begin{equation*}
B(x_1,\ldots,x_n):=\{T\in\Over(K|D)\mid x_1,\ldots,x_n\in T\},
\end{equation*}
as $\{x_1,\ldots,x_n\}$ ranges among the finite subsets of $K$. Under this topology, both $\Over(K|D)$ \cite[Proposition 3.5]{finocchiaro-ultrafiltri} and its subspace $\Zar(K|D)$ \cite{fontana_krr-abRs,dobbs_fedder_fontana} are spectral spaces, and the order induced by this topology is the inverse of the set-theoretic inclusion. In particular, every $Y\subseteq\Over(K|D)$ with a minimum element is compact, and, if $Z$ is an arbitrary subset of $\Over(K|D)$, then $Z^{\gen}=\{T\in\Over(K|D)\mid T\supseteq A\text{~for some~}A\in Z\}$.

We denote by $\Zarmin(D)$ the set of minimal elements of $\Zar(D)$; since $\Zar(D)$ is a spectral space, every $V\in\Zar(D)$ contains an element $W\in\Zarmin(D)$.

If $K$ is the quotient field of $D$, then we set $\Over(K|D)=:\Over(D)$ and $\Zar(K|D)=:\Zar(D)$. Elements of $\Over(D)$ are called \emph{overrings} of $D$, elements of $\Zar(D)$ are the \emph{valuation overrings} of $D$ and elements of $\Zarmin(D)$ are the \emph{minimal valuation overrings} of $D$.

The \emph{center map} is the application
\begin{equation*}
\begin{aligned}
\gamma\colon\Zar(K|D) & \longrightarrow \Spec(D)\\
V & \longmapsto \mathfrak{m}_V\cap D,
\end{aligned}
\end{equation*}
where $\mathfrak{m}_V$ is the maximal ideal of $V$. When $\Zar(K|D)$ and $\Spec(D)$ are endowed with the respective Zariski topologies, the map $\gamma$ is continuous (\cite[Chapter VI, \textsection 17, Lemma 1]{zariski_samuel_II} or \cite[Lemma 2.1]{dobbs_fedder_fontana}), surjective (this follows, for example, from \cite[Theorem 5.21]{atiyah} or \cite[Theorem 19.6]{gilmer}) and closed \cite[Theorem 2.5]{dobbs_fedder_fontana}.

\subsection{Semistar operations}\label{sect:semistar}
Let $D$ be a domain with quotient field $K$. Let $\inssubmod(D)$ be the set of $D$-submodules of $K$, $\insfracid(D)$ be the set of fractional ideals of $D$, and $\insfracidfg(D)$ be the set of finitely generated fractional ideals of $D$.

A \emph{semistar operation} on $D$ is a map $\star:\inssubmod(D)\longrightarrow\inssubmod(D)$, $I\mapsto I^\star$, such that, for every $I,J\in\inssubmod(D)$ and every $x\in K$,
\begin{enumerate}
\item $I\subseteq I^\star$;
\item if $I\subseteq J$, then $I^\star\subseteq J^\star$;
\item $(I^\star)^\star=I^\star$;
\item $x\cdot I^\star=(xI)^\star$.
\end{enumerate}
Given a semistar operation $\star$, the map $\star_f$ is defined on every $E\in\inssubmod(D)$ by
\begin{equation*}
E^{\star_f}=\bigcup\{F^\star\mid F\in\insfracidfg(D), F\subseteq E\}.
\end{equation*}
The map $\star_f$ is always a semistar operation; if $\star=\star_f$, then $\star$ is said to be \emph{of finite type}. Two semistar operations of finite type $\star_1,\star_2$ are equal if and only if $I^{\star_1}=I^{\star_2}$ for every $I\in\insfracidfg(D)$. See \cite{okabe-matsuda} for general informations about semistar operations.

If $\Delta\subseteq\Zar(D)$, then $\wedge_\Delta$ is defined as the semistar operation on $D$ such that
\begin{equation*}
I^{\wedge_\Delta}:=\bigcap\{IV\mid V\in\Delta\}
\end{equation*}
for every $D$-submodule $I$ of $K$; a semistar operation of type $\wedge_\Delta$ is said to be a \emph{valuative semistar operation}. By \cite[Proposition 4.5]{topological-cons}, $\wedge_\Delta$ is of finite type if and only if $\Delta$ is compact (in the Zariski topology of $\Zar(D)$). If $\Delta,\Lambda\subseteq\Zar(D)$, then $\wedge_\Delta=\wedge_\Lambda$ if and only if $\Delta^\gen=\Lambda^\gen$ \cite[Lemma 5.8(1)]{spettrali-eab}, while $(\wedge_\Delta)_f=(\wedge_\Lambda)_f$ if and only if $\Delta$ and $\Lambda$ have the same closure with respect to the inverse topology \cite[Theorem 4.9]{fifolo_transactions}. The semistar operation $\wedge_{\Zar(D)}$ is usually denoted by $b$ and called the \emph{$b$-operation}.

\section{The use of minimal valuation domains}\label{sect:Vmin}
The starting point of this paper is the following well-known result.
\begin{prop}[see e.g. {{\protect\cite[Proposition 6.8.2]{swanson_huneke}}}]\label{prop:ic-b}
Let $I$ be an ideal of an integral domain $D$; let $x\in D$. Then, $x\in IV$ for every $V\in\Zar(D)$ if and only if there are $n\geq 1$ and $a_1,\ldots,a_n\in D$ such that $a_i\in I^i$ and 
\begin{equation}\label{eq:ic}
x^n+a_1x^{n-1}+\cdots+a_{n-1}x+a_n=0.
\end{equation}
\end{prop}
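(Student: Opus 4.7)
My plan is to split the equivalence into two independent arguments: the implication ``integral equation gives membership in every $IV$'' is a direct valuation-theoretic estimate, while the converse is obtained by passing to the Rees algebra and invoking the characterization of the integral closure of a domain as the intersection of its valuation overrings.

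For the ``if'' direction, I fix $V\in\Zar(D)$ with associated valuation $v$. Since $V$ is a valuation ring, its $V$-submodules of $K$ are totally ordered by inclusion, and so $IV=\bigcup_{y\in I}yV$. Assuming for contradiction that $x\notin IV$, i.e.\ $v(x)<v(y)$ for every nonzero $y\in I$, I note that each $a_i\in I^i$ can be written as a finite sum of $i$-fold products of elements of $I$, so $v(a_i)>i\,v(x)$, and hence $v(a_ix^{n-i})>nv(x)=v(x^n)$ for every $i\geq 1$. Then the term $x^n$ would have strictly smaller valuation than every other summand in \eqref{eq:ic}, forcing the sum to be nonzero, a contradiction.

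For the ``only if'' direction, I consider the Rees algebra $R:=D[It]\subseteq K[t]$, a domain whose quotient field is $K(t)$ (the case $I=0$ is trivial, so I assume $I\neq 0$). The key step is to show that $xt\in K(t)$ lies in every valuation overring $W$ of $R$. For such a $W$, the restriction $V:=W\cap K$ is a valuation ring of $K$ containing $D$, hence $V\in\Zar(D)$; by hypothesis $x=\sum_j v_j y_j$ with $v_j\in V\subseteq W$ and $y_j\in I$, and so $xt=\sum_j v_j(y_jt)\in W$ because each $y_jt$ lies in $R\subseteq W$. Consequently $xt$ belongs to the integral closure of $R$ in $K(t)$, and thus satisfies a monic equation $(xt)^n+b_1(xt)^{n-1}+\cdots+b_n=0$ with $b_i\in R$. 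Decomposing each $b_i$ along the natural $t$-grading of $R$ and extracting the homogeneous component of total degree $n$ in $t$ isolates a relation of the form \eqref{eq:ic}, with the coefficient $a_i$ of $x^{n-i}$ coming from the degree-$i$ piece of $b_i$, which by construction lies in $I^i\cdot t^i$.

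The main obstacle is the second direction: converting the purely existential information ``$x\in IV$ for every $V$'' into the explicit polynomial identity \eqref{eq:ic}. The Rees-algebra trick is what makes this possible, as it packages the hypothesis into the assertion that a single element $xt$ lies in every valuation overring of $R$, which in turn is equivalent to integrality over $R$; the homogeneity of $R$ in the variable $t$ then automatically forces the resulting monic relation to exhibit the required degree pattern $a_i\in I^i$.
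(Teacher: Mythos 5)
Your proof is correct, but it takes a genuinely different route from the one the paper cites (Swanson--Huneke, Proposition 6.8.2) and replays in generalized form in Proposition \ref{prop:domina-Ff-I}. There, the ``only if'' direction works with the algebra $A := D[x^{-1}I]$ and the fractional ideal $J := x^{-1}I$: one shows $JA = A$ (otherwise a valuation ring dominating a localization of $A$ at a maximal ideal containing $J$ would give $x \notin IV$), and then writing $1$ as an $A$-combination of generators of $J$ and clearing denominators produces \eqref{eq:ic}. Your version passes instead to the Rees algebra $R := D[It]$, shows $xt$ lies in every valuation overring of $R$, uses that the integral closure of a domain is the intersection of its valuation overrings, and extracts \eqref{eq:ic} from the degree-$n$ homogeneous component of the resulting monic relation. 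Both arguments rest on the same foundational input (the existence of valuation rings dominating a given local domain, equivalently the representation of the integral closure as an intersection of valuation overrings). What the $D[x^{-1}I]$ form buys the paper is flexibility: it localizes to a single finitely generated algebra $A$, which is exactly why it carries over to the relative setting of Proposition \ref{prop:domina-Ff-I}, where $\Delta$ is only assumed to dominate $D[\insfracidfg]$; the Rees-algebra argument inherently needs information about all valuation overrings of $R$ at once. Your ``if'' direction, via the strict valuation estimate $v(a_ix^{n-i}) > v(x^n)$, is the standard one and is fine.
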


An inspection of the proof of the previous proposition given in \cite{swanson_huneke} shows that this result does not really rely on the fact that $I$ is an ideal of $D$, or on the fact that $x\in D$; indeed, it applies to every $D$-submodule $I$ of the quotient field $K$, and to every $x\in K$. In the terminology of semistar operations, this means that, for each $I\in\inssubmod(D)$, $I^b=I^{\wedge_{\Zar(D)}}$ is exactly the set of $x\in K$ that verifies an equation like \eqref{eq:ic}, with $a_i\in I^i$. We are interested in generalizing that proof in a different way; we need the following definitions.
\begin{defin}
Let $D$ be an integral domain and let $\Delta,\Lambda\subseteq\Over(D)$. We say that $\Lambda$ \emph{dominates} $\Delta$ if, for every $T\in\Delta$ and every $M\in\Max(T)$, there is a $A\in\Lambda$ such that $T\subseteq A$ and $1\notin MA$.
\end{defin}
For example, $\Zar(D)$ dominates every subset of $\Over(D)$, while the set of localizations of $D$ dominates $\{D\}$.

\begin{defin}
Let $D$ be an integral domain domain. We denote by $D[\insfracidfg]$ the set of finitely generated $D$-algebras of $\Over(D)$, or equivalently
\begin{equation*}
D[\insfracidfg]:=\{D[I]:I\in\insfracidfg(R)\}.
\end{equation*}
\end{defin}

Even if the proof of the following result essentially repeats the proof of \cite[Proposition 6.8.2]{swanson_huneke}, we replay it here for clarity.
\begin{prop}\label{prop:domina-Ff-I}
Let $D$ be an integral domain, and suppose that $\Delta\subseteq\Zar(D)$ dominates $D[\insfracidfg]$. Then, for every finitely generated ideal $I$ of $D$, $I^{\wedge_\Delta}=I^b$.
\end{prop}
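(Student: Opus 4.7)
The plan is to use Proposition \ref{prop:ic-b} as the target: since $\Delta \subseteq \Zar(D)$ gives the trivial inclusion $I^b \subseteq I^{\wedge_\Delta}$ for free, it suffices to take a nonzero $x \in I^{\wedge_\Delta}$ (so that $x \in IV$ for every $V \in \Delta$) and produce an equation $x^n + a_1 x^{n-1} + \cdots + a_n = 0$ with $a_i \in I^i$; the case $x=0$ is trivial. By the extension of Proposition \ref{prop:ic-b} to submodules indicated in the text, exhibiting such an equation will force $x \in JV$ for every $V \in \Zar(D)$, hence $x \in I^b$.

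The key construction is the overring $R := D[Ix^{-1}] \subseteq K$. Since $I$ is finitely generated, $R$ lies in $D[\insfracidfg]$; writing $I = (y_1, \ldots, y_m)$ and expanding polynomials in the $y_i/x$ with coefficients in $D$, one sees that $R = \sum_{k \geq 0} I^k x^{-k}$ as a $D$-module. Let $\mathfrak{a} := (Ix^{-1})R = \sum_{k \geq 1} I^k x^{-k}$. The pivotal translation is that $1 \in \mathfrak{a}$ is equivalent to the desired equation: from $1 = \sum_{i=1}^n a_i x^{-i}$ with $a_i \in I^i$, multiplying through by $x^n$ gives $x^n - a_1 x^{n-1} - \cdots - a_n = 0$, and conversely every such equation produces such a representation of $1$.

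It therefore remains to show $\mathfrak{a} = R$. Suppose not; then $\mathfrak{a}$ is contained in some maximal ideal $M$ of $R$. The domination hypothesis, applied to $R \in D[\insfracidfg]$, provides $V \in \Delta$ with $R \subseteq V$ and $1 \notin MV$, whence $M \subseteq \mathfrak{m}_V$. In particular $Ix^{-1} \subseteq \mathfrak{m}_V$, i.e.\ $I \subseteq x\mathfrak{m}_V$. Combined with the initial $x \in IV$, this yields
\begin{equation*}
x \in IV \subseteq x\mathfrak{m}_V \cdot V = x\mathfrak{m}_V,
\end{equation*}
so that $1 \in \mathfrak{m}_V$ after cancelling the nonzero $x$ in $K$ — a contradiction. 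The main creative step, compared to the bare equations of Proposition \ref{prop:ic-b}, is to recognize that the shape of the required polynomial identity is precisely encoded by the membership $1 \in (Ix^{-1}) \cdot D[Ix^{-1}]$; once this translation is in place, the domination hypothesis supplies the valuation overring needed to close the argument in a couple of lines.
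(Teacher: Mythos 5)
Your proof is correct and follows essentially the same route as the paper: you form the finitely generated $D$-algebra $R = D[Ix^{-1}]$ (the paper's $A = D[J]$ with $J = x^{-1}I$), show $(Ix^{-1})R$ must be the unit ideal since otherwise domination produces a $V \in \Delta$ with $IV \subseteq x\mathfrak{m}_V$, contradicting $x \in IV$, and then unwind $1 \in (Ix^{-1})R$ into an equation of integral dependence for $x$ over $I$. The only blemish is a typo near the start, where ``$x \in JV$'' should read ``$x \in IV$'' (no $J$ appears in your argument); otherwise the proof matches the paper's step for step.
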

\begin{proof}
Clearly, $I^b\subseteq I^{\wedge_\Delta}$. Suppose thus that $x\in I^{\wedge_\Delta}$, $x\neq 0$, and let $I=(i_1,\ldots,i_k)D$. Define $J:=x^{-1}I\in\insfracidfg(D)$, and let $A:=D[J]=D[x^{-1}i_1,\ldots,x^{-1}i_k]$; by definition, $J\subseteq A$.

If $JA\neq A$, then there is a maximal ideal $M$ of $A$ containing $J$, and thus, by domination, there is a valuation domain $V\in\Delta$ containing $A$ whose maximal ideal $\mathfrak{m}_V$ is such that $JV\subseteq\mathfrak{m}_V$, and thus $IV\subseteq x\mathfrak{m}_V$. However, $x\in I^b\subseteq IV$, which implies $x\in x\mathfrak{m}_V$, a contradiction.

Hence, $JA=A$, i.e., $1=j_1a_1+\cdots+j_na_n$ for some $j_t\in J$, $a_t\in A$; expliciting the elements of $A$ as elements of $D[J]$ and using $J=x^{-1}I$, we find that there must be an $N\inN$ and elements $i_t\in I^t$ such that $x^N=i_1x^{N-1}+\cdots+i_{N-1}x+i_N$, which gives an equation of integral dependence of $x$ over $I$. Therefore, $x\in I^b$, as requested.
\end{proof}

We can now use the properties of valuative semistar operations to study compactness.
\begin{prop}\label{prop:comp-Zarmin}
Let $D$ be an integral domain, and let $\Delta\subseteq\Zar(D)$ be a set that dominates $D[\insfracidfg]$. Then, $\Delta$ is compact if and only if it contains $\Zarmin(D)$.
\end{prop}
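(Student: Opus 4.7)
The plan is to recast compactness of $\Delta$ in the language of valuative semistar operations and reduce to two facts recalled in the preliminaries: (a) for $\Theta\subseteq\Zar(D)$, the operation $\wedge_\Theta$ is of finite type if and only if $\Theta$ is compact, and (b) $\wedge_\Delta=\wedge_\Lambda$ if and only if $\Delta^\gen=\Lambda^\gen$. The key observation is that $b=\wedge_{\Zar(D)}$, attached to the compact space $\Zar(D)$, is itself of finite type.

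For the \emph{if} direction, I would assume $\Delta\supseteq\Zarmin(D)$. Since every $V\in\Zar(D)$ contains some $W\in\Zarmin(D)$, it follows that $\Zar(D)\subseteq\Delta^\gen$, and hence $\Delta^\gen=\Zar(D)=\Zar(D)^\gen$. By (b) this gives $\wedge_\Delta=b$, and (a) then yields that $\Delta$ is compact. Notice that no appeal to the domination hypothesis is needed for this implication.

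For the \emph{only if} direction, I would assume $\Delta$ is compact, so that $\wedge_\Delta$ is of finite type by (a). Applying Proposition \ref{prop:domina-Ff-I}, $I^{\wedge_\Delta}=I^b$ for every finitely generated ideal $I$ of $D$; using axiom (4) of a semistar operation to scale by arbitrary nonzero elements of $K$, this equality extends to every finitely generated fractional ideal. Two semistar operations of finite type coinciding on such ideals must be equal, so $\wedge_\Delta=b$, and then (b) gives $\Delta^\gen=\Zar(D)$. Finally, any $W\in\Zarmin(D)$ lies in $\Delta^\gen$, so there exists $V\in\Delta\subseteq\Zar(D)$ with $V\subseteq W$; the minimality of $W$ in $\Zar(D)$ forces $V=W$, whence $W\in\Delta$. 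The most delicate step is the passage from equality on finitely generated ideals to full equality of the semistar operations, which is precisely where Proposition \ref{prop:domina-Ff-I}, and with it the domination hypothesis, is used.
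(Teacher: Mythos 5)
Your proof is correct and in substance mirrors the paper's: the nontrivial (``only if'') direction runs through Proposition~\ref{prop:domina-Ff-I} to get $\wedge_\Delta=b$ on finitely generated (fractional) ideals, then uses the finite-type criterion and the $\gen$-closure characterization to conclude $\Delta^\gen=\Zar(D)$, exactly as the paper does (the paper phrases the last step via inverse-topology closures rather than directly via $\Delta^\gen$, but this is the same equivalence). For the easy (``if'') direction you route through the semistar correspondences, whereas the paper gives a direct open-cover argument using that opens in $\Zar(D)$ are upward-closed under inclusion; both are one-liners and equally valid.
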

\begin{proof}
If $\Delta$ contains $\Zarmin(D)$, then $\mathcal{U}$ is an open cover of $\Delta$ if and only if it is an open cover of $\Zar(D)$; thus, $\Delta$ is compact since $\Zar(D)$ is.

Conversely, suppose $\Delta$ is compact. By Proposition \ref{prop:domina-Ff-I}, $I^{\wedge_\Delta}=I^b$ for every finitely generated ideal $I$; hence, $(\wedge_\Delta)_f=b_f=b$. By \cite[Lemma 5.8(1)]{spettrali-eab}, it follows that the closure of $\Delta$ with respect to the inverse topology of $\Zar(D)$ is the whole $\Zar(D)$; however, since $\Delta$ is compact, its closure in the inverse topology is exactly $\Delta^\gen=\Delta^\uparrow=\{W\in\Zar(D)\mid W\supseteq V\text{~for some~}V\in\Delta\}$. Hence, $\Delta$ must contain $\Zarmin(D)$.
\end{proof}

Thus, to find a subset of $\Zar(D)$ that is not compact, it is enough to find a $\Delta$ that dominates $D[\insfracidfg]$ but that does not contain $\Zarmin(D)$. The easiest case where this criterion can be applied is when $\Delta=\Zar(D)\setminus\{V\}$ for some $V\in\Zarmin(D)$.
\begin{teor}\label{teor:Zar-meno-V}
Let $D$ be an integral domain and let $V\in\Zarmin(D)$. If $\Zar(D)\setminus\{V\}$ is compact, then $V$ is the integral closure of $D[x_1,\ldots,x_n]_M$ for some $x_1,\ldots,x_n\in K$ and some $M\in\Max(D[x_1,\ldots,x_n])$.
\end{teor}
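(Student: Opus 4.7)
The plan is to apply Proposition~\ref{prop:comp-Zarmin} in contrapositive form. Set $\Delta := \Zar(D) \setminus \{V\}$; since $V \in \Zarmin(D)$, we have $\Zarmin(D) \not\subseteq \Delta$, so if $\Delta$ is compact, Proposition~\ref{prop:comp-Zarmin} forces $\Delta$ to fail to dominate $D[\insfracidfg]$. Unwinding this failure, there must exist a finitely generated $D$-algebra $A = D[x_1,\ldots,x_n] \in D[\insfracidfg]$ (with $I := (x_1,\ldots,x_n)D$ the witnessing fractional ideal) and a maximal ideal $M \in \Max(A)$ such that no $W \in \Delta$ satisfies both $A \subseteq W$ and $1 \notin MW$.

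Next, I would observe that, because $M$ is maximal in $A$, the combined conditions ``$A \subseteq W$ and $1 \notin MW$'' are equivalent to saying that $W$ is a valuation overring of the localization $A_M$: the second condition forces the center of $W$ on $A$ to contain, hence equal, $M$, so every element of $A \setminus M$ is invertible in $W$, giving $W \supseteq A_M$; the converse is clear. Since $A_M$ is a local domain whose quotient field is still $K$, the standard Zorn-type argument (the existence part of Krull's theorem on dominating valuation rings) produces at least one valuation ring of $K$ dominating $A_M$. Every such valuation ring lies in $\Zar(D)$ but, by the non-domination just established, outside $\Delta$; hence it must coincide with $V$. So $V$ is the \emph{unique} valuation overring of $A_M = D[x_1,\ldots,x_n]_M$ inside $K$.

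To conclude, I invoke the classical description of integral closure: for any domain $R$ with quotient field $K$, the integral closure of $R$ in $K$ equals the intersection of all valuation overrings of $R$ in $K$. Applied to $R = A_M$, this intersection collapses to the single ring $V$, yielding $V = \overline{D[x_1,\ldots,x_n]_M}$, as required.

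The main obstacle, I anticipate, is the bookkeeping in the second paragraph: the non-domination condition must be turned into the geometric assertion that $V$ is the unique valuation overring of $A_M$, which requires combining the Zorn-existence of one such valuation ring with the assumption that none lies in $\Delta$. Everything else (the translation of ``$A \subseteq W$ and $1 \notin MW$'' into containment of $A_M$, and the final identification with the integral closure) is standard.
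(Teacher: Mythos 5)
Your overall strategy is the one the paper follows: invoke Proposition~\ref{prop:comp-Zarmin} to get non-domination, extract a finitely generated algebra $A$ and a maximal ideal $M$ witnessing the failure, and then identify $V$ with the integral closure of $A_M$ by a valuation-theoretic argument. However, your second paragraph contains a genuine error. You claim that, for a valuation ring $W$ of $K$, the conditions ``$A\subseteq W$ and $1\notin MW$'' are \emph{equivalent} to ``$W\supseteq A_M$,'' and you justify the converse by ``the converse is clear.'' It is not; it is false. If $W\supseteq A_M$ but the center of $W$ on $A_M$ is a proper subideal of $MA_M$, then $MW=W$ and $1\in MW$ — the most obvious example is $W=K$, which always contains $A_M$ yet satisfies $MK=K$ when $M\neq(0)$. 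The correct translation is that the two conditions say $W$ \emph{dominates} $A_M$, i.e., $W\supseteq A_M$ and $\mathfrak m_W\cap A_M = MA_M$.

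As a consequence, the sentence ``So $V$ is the unique valuation overring of $A_M$ inside $K$'' is false whenever $\dim(V)\geq 1$ (again, $K$ itself is another valuation overring), and your final step — intersecting a family that you believed to be the singleton $\{V\}$ — does not go through as written. What you have actually proved is the weaker statement that $V$ is the unique valuation ring of $K$ whose center on $A_M$ is $MA_M$. To reach the conclusion you need one more ingredient, which is exactly what the paper supplies: every valuation overring of $A_M$ contains a valuation ring of $K$ centered on $MA_M$ (this is \cite[Corollary 19.7]{gilmer}). Granting that, every valuation overring of $A_M$ contains $V$, and since $V$ is itself one of them, the intersection of all valuation overrings of $A_M$ — i.e., the integral closure of $A_M$ — equals $V$. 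Repair your proof by replacing ``unique valuation overring'' with ``unique valuation ring dominating $A_M$'' and inserting the lying-under step from \cite[Corollary 19.7]{gilmer} before taking the intersection.
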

\begin{proof}
If $\Delta:=\Zar(D)\setminus\{V\}$ is compact, then by Proposition \ref{prop:comp-Zarmin} it cannot dominate $D[\insfracidfg]$. Hence, there is a finitely generated fractional ideal $I$ such that $\Delta$ does not dominate $A:=D[I]$, and so a maximal ideal $M$ of $A$ such that $1\in MW$ for every $W\in\Delta$. In particular, $A\neq K$ (otherwise $M$ would be $(0)$).

However, there must be a valuation ring containing $A_M$ whose center (on $A_M$) is $MA_M$, and the unique possibility for this valuation ring is $V$: it follows that $V$ is the unique valuation ring centered on $MA_M$. However, the integral closure of $A_M$ is the intersection of the valuation rings with center $MA_M$ (since every valuation ring containing $A_M$ contains a valuation ring centered on $MA_M$ \cite[Corollary 19.7]{gilmer}); thus, $V$ is the integral closure of $A_M$.
\end{proof}

\section{The dimension of $V$}\label{sect:dimV}
Before embarking on using Theorem \ref{teor:Zar-meno-V}, we prove a simple yet general result.
\begin{prop}\label{prop:spec-noeth}
Let $D$ be an integral domain. If $\Zar(D)$ is a Noetherian space, so is $\Spec(D)$.
\end{prop}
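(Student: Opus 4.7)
The plan is to invoke the center map $\gamma\colon\Zar(D)\to\Spec(D)$ recalled in the Preliminaries, which is known to be continuous and surjective, and to exploit the general topological fact that a continuous surjective image of a Noetherian space is again Noetherian.

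More concretely, I would first remind the reader that, by the discussion of the center map, $\gamma$ is continuous when both $\Zar(D)$ and $\Spec(D)$ are endowed with their Zariski topologies, and is surjective (going-up / \cite[Theorem 19.6]{gilmer}). Then I would prove the general lemma: if $f\colon X\to Y$ is a continuous surjection and $X$ is Noetherian, then $Y$ is Noetherian. For this, take any ascending chain of open subsets $U_1\subseteq U_2\subseteq\cdots$ in $Y$; pulling back gives an ascending chain $f^{-1}(U_1)\subseteq f^{-1}(U_2)\subseteq\cdots$ of open subsets of $X$, which stabilizes because $X$ is Noetherian. Since $f$ is surjective, $f(f^{-1}(U_i))=U_i$, so the original chain stabilizes too. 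Applying this to $f=\gamma$ gives the claim.

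There is no real obstacle here: the only thing to check is that the two cited properties of $\gamma$ (continuity and surjectivity) are sufficient, and both are explicitly recorded in the overview of $\Zar(K|D)$. One could alternatively phrase the argument via the equivalent definition of Noetherianness (every subspace is compact): for any $S\subseteq\Spec(D)$, the preimage $\gamma^{-1}(S)$ is compact as a subspace of the Noetherian space $\Zar(D)$, and $S=\gamma(\gamma^{-1}(S))$ is then the continuous image of a compact set, hence compact. Either formulation yields a short, self-contained proof, so I would pick the ascending-chain version for brevity and place the statement as an immediate corollary of the properties of $\gamma$ already collected in the Preliminaries.
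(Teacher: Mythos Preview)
Your proposal is correct and follows exactly the paper's approach: the paper's proof is the one-line observation that $\Spec(D)$ is the continuous image of $\Zar(D)$ via the center map $\gamma$, and that the continuous image of a Noetherian space is Noetherian. Your version simply spells out the standard ascending-chain argument for the latter fact, which the paper leaves implicit.
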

\begin{proof}
The claim follows from the fact that $\Spec(D)$ is the continuous image of $\Zar(D)$ through the center map $\gamma$, and that the image of a Noetherian space is still Noetherian.
\end{proof}

Note that the converse of this proposition is far from being true (this is, for example, a consequence of Proposition \ref{prop:polynomial} or of Proposition \ref{prop:noethdom}).

The problem in using Theorem \ref{teor:Zar-meno-V} is that it is usually difficult to control the behaviour of finitely generated algebras over $D$. We can, however, control the behaviour of the prime spectrum of $D$.
\begin{lemma}\label{lemma:spec-linord}
Let $D$ be an integral domain, and let $V\in\Zar(D)$ be the integral closure of  $D_M$, for some $M\in\Spec(D)$. Then, the set of prime ideals of $D$ contained in $M$ is linearly ordered.
\end{lemma}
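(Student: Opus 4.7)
The plan is to lift the ordering question for primes of $D$ under $M$ to the spectrum of $V$, where total ordering is automatic. The crucial observation is that the hypothesis that $V$ equals the integral closure of $D_M$ forces $V$ to sit below every valuation overring of $D_M$. Indeed, since $K$ is the quotient field of $D_M$, the integral closure of $D_M$ in $K$ coincides with the intersection of $\Zar(K|D_M)$, so
\[
V \;=\; \bigcap\{W : W\in\Zar(K|D_M)\},
\]
and in particular $V\subseteq W$ for every valuation overring $W$ of $D_M$.

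Next, given any prime $P$ of $D$ with $P\subseteq M$, I would produce a valuation overring $W_P$ of $D$ whose center on $D$ is $P$; this is the usual ``existence of valuations with prescribed center'' argument already invoked in the proof of Theorem \ref{teor:Zar-meno-V} (e.g.\ \cite[Corollary 19.7]{gilmer}). The point then is that such a $W_P$ is automatically an overring of $D_M$: any element of $D\setminus M$ lies outside $P=\mathfrak{m}_{W_P}\cap D$, hence outside $\mathfrak{m}_{W_P}$, and is therefore a unit of $W_P$. Combined with the previous step, this gives $V\subseteq W_P$, so that $\mathfrak{p}_P:=\mathfrak{m}_{W_P}\cap V$ is a prime of $V$ that contracts to $P$ on $D$.

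The conclusion is immediate: $\Spec(V)$ is linearly ordered by inclusion because $V$ is a valuation ring, and contraction of primes is order-preserving. Therefore the set $\{\mathfrak{p}_P : P\in\Spec(D),\ P\subseteq M\}\subseteq\Spec(V)$ is a chain, and its image under contraction to $D$ — which is exactly $\{P\in\Spec(D): P\subseteq M\}$ — is a chain as well.

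The only mildly delicate step is the middle one, namely checking that the valuation overring $W_P$ centered at $P$ actually lies above $D_M$; but this is forced by $P\subseteq M$ together with the characterization of $\mathfrak{m}_{W_P}\cap D$. Everything else is formal manipulation of the identity $V=\bigcap\Zar(K|D_M)$ and of the linear order on $\Spec(V)$.
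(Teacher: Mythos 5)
Your proof is correct, but it takes a genuinely different route from the paper's. The paper goes the direct, algebraic way: since $D_M\subseteq V$ is an integral extension, Lying Over produces primes $P',Q'\in\Spec(V)$ over $PD_M$ and $QD_M$ respectively, and the linear order on $\Spec(V)$ (by $V$ being a valuation ring) finishes the job in three lines. You instead take the valuation-theoretic detour: you represent $V=\bigcap\Zar(K|D_M)$, then for each $P\subseteq M$ invoke the existence of a valuation overring $W_P$ of $D$ with center $P$, check that $W_P\supseteq D_M$ (since elements of $D\setminus M$ are units in $W_P$), deduce $V\subseteq W_P$, and contract $\mathfrak{m}_{W_P}$ to $V$ to obtain a prime over $P$. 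In effect you re-derive the relevant instance of Lying Over for $D_M\subseteq V$ out of the existence theorem for valuations with prescribed center plus the characterization of the integral closure as $\bigcap\Zar(K|D_M)$. Both inputs are standard and your steps check out (in particular $\mathfrak{p}_P\cap D=\mathfrak{m}_{W_P}\cap D=P$ because $D\subseteq V$). The paper's argument is shorter and relies on a more primitive tool; yours fits nicely with the paper's general theme (valuations with prescribed centers are already used in the proof of Theorem~\ref{teor:Zar-meno-V}) but is slightly heavier machinery for a one-lemma payoff.
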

\begin{proof}
Let $P,Q$ be two prime ideals of $D$ contained in $M$; then, $PD_M,QD_M\in\Spec(D_M)$. Since $D_M\subseteq V$ is an integral extension, $PD_M=P'\cap D_M$ and $QD_M=Q'\cap D_M$ for some $P',Q'\in\Spec(V)$; however, $V$ is a valuation domain, and thus (without loss of generality) $P'\subseteq Q'$. Hence, $PD_M\subseteq QD_M$ and $P\subseteq Q$, as requested.
\end{proof}

\begin{prop}\label{prop:leq2dim}
Let $D$ be an integral domain, let $V\in\Zarmin(D)$ and suppose that $\Zar(D)\setminus\{V\}$ is compact. Let $\iota_V:\Spec(V)\longrightarrow\Spec(D)$ be the canonical spectral map associated to the inclusion $D\hookrightarrow V$. For every $P\in\Spec(D)$, $|\iota_V^{-1}(P)|\leq 2$; in particular, $\dim(V)\leq 2\dim(D)$.
\end{prop}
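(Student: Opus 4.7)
The plan is to use Theorem \ref{teor:Zar-meno-V} to reduce the fiber count for $\iota_V$ to one for the map $\Spec(A_M) \to \Spec(D)$ associated to a finitely generated overring $A$, and then to use the valuation structure of $V$ together with its minimality to rule out chains of three equicentered primes.

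By Theorem \ref{teor:Zar-meno-V}, $V$ is the integral closure of $A_M$ for some finitely generated overring $A = D[x_1,\ldots,x_n]$ of $D$ and some $M \in \Max(A)$. The inclusion $A_M \hookrightarrow V$ is integral, so lying-over makes the map $\Spec(V) \to \Spec(A_M)$ surjective; incomparability forces injectivity as well, since any two primes of $V$ contracting to the same prime of $A_M$ would be incomparable, whereas $\Spec(V)$ is a chain. Consequently, $\Spec(A_M)$ is itself a chain, and $|\iota_V^{-1}(P)|$ equals the number of primes of $A_M$ contracting to $P$ in $D$.

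For $P = (0)$, the fact that $A$ is an overring gives $A \otimes_D K = K$, so the generic fiber is a single point and $|\iota_V^{-1}((0))| = 1$. For $P \neq (0)$ I would argue by contradiction: three distinct primes $Q_0 \subsetneq Q_1 \subsetneq Q_2$ of $V$ over $P$ yield three primes $P_0 \subsetneq P_1 \subsetneq P_2$ of $A_M$ over $P$ and produce a strict chain of valuation overrings $V_{Q_0} \supsetneq V_{Q_1} \supsetneq V$ all centered at $P$. The key technical step, which I expect to be the main obstacle, is to use both the minimality of $V$ and the fact that $V$ is the integral closure of $A_M$ to exclude such a chain. A natural attempt is to pass to $V/Q_0 \subseteq \kappa(Q_0)$, a valuation domain of Krull dimension at least $2$ that is the integral closure in $\kappa(Q_0)$ of $A_M/P_0$ (itself a localization of the finitely generated $(D/P)$-algebra $A/P_0$), and then either apply a variant of Lemma \ref{lemma:spec-linord} over the base $D/P$ or construct, from the composite structure of the chain, a proper valuation subring of $V$ containing $D$ that contradicts the minimality of $V$.

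Once the fiber bound $|\iota_V^{-1}(P)| \leq 2$ is established, the inequality $\dim(V) \leq 2 \dim(D)$ follows by counting: the image of $\iota_V$ in $\Spec(D)$ is a chain of length at most $\dim(D)$, so it contributes at most $\dim(D) + 1$ elements; the fiber over $(0)$ has exactly one element, while each other fiber has at most two, yielding $|\Spec(V)| \leq 1 + 2\dim(D)$ and therefore $\dim(V) \leq 2\dim(D)$.
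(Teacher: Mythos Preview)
Your setup is sound: the reduction via Theorem \ref{teor:Zar-meno-V}, the bijection $\Spec(V)\to\Spec(A_M)$ from incomparability plus the chain condition on $\Spec(V)$, the handling of the fiber over $(0)$, and the final counting for $\dim(V)\leq 2\dim(D)$ all match the paper. But the heart of the argument is exactly the step you flag as ``the main obstacle'', and there your proposal is not a proof --- it is a list of hopes, and the ones you emphasize point the wrong way.

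Minimality of $V$ is a red herring here: it was already consumed in Theorem \ref{teor:Zar-meno-V} to produce $A$ and $M$, and plays no further role. The composite/valuation-subring construction you allude to does not obviously produce anything below $V$ that still contains $D$. Likewise, invoking a ``variant of Lemma \ref{lemma:spec-linord}'' does not help: that lemma is precisely what gives you the linear ordering of $\Spec(A_M)$, and you already have that --- what you need is a reason why a chain of length $2$ inside a single fiber is impossible, not more linear ordering.

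The missing idea is Noetherianity of the fiber. The paper lifts the three primes of $A_M$ to primes $\mathfrak{q}_1\subsetneq\mathfrak{q}_2\subsetneq\mathfrak{q}_3$ of the polynomial ring $D[X_1,\ldots,X_n]$ (through which $A_M$ is presented); all three contract to $P$ in $D$, so they live in the fiber $\Spec(F[X_1,\ldots,X_n])$ with $F=\kappa(P)$. In a Noetherian domain a chain of length $\geq 2$ forces infinitely many primes strictly between the endpoints, and these cannot be linearly ordered --- contradicting Lemma \ref{lemma:spec-linord}. Your own remark that $A_M/P_0$ is a localization of a finitely generated $(D/P)$-algebra is one step away from this (base-change to $F$ and you have a Noetherian ring), but you do not take that step; instead you turn toward minimality. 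Identify the contradiction as ``Noetherian fiber versus linearly ordered spectrum of height $\geq 2$'' and the gap closes.
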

\begin{proof}
Suppose $|\iota_V^{-1}(P)|>2$: then, there are prime ideals $Q_1\subsetneq Q_2\subsetneq Q_3$ of $V$ such that $\iota_V(Q_1)=\iota_V(Q_2)=\iota_V(Q_3)=:P$. If $\Zar(D)\setminus\{V\}$ is compact, by Theorem \ref{teor:Zar-meno-V} there is a finitely generated $D$-algebra $A:=D[a_1,\ldots,a_n]$ such that $V$ is the integral closure of $A_M$, for some maximal ideal $M$ of $A$. We can write $A_M$ as a quotient $\frac{D[X_1,\ldots,X_n]_\mathfrak{a}}{\mathfrak{b}}$, where $X_1,\ldots,X_n$ are independent indeterminates and $\mathfrak{a},\mathfrak{b}\in\Spec(D[X_1,\ldots,X_n])$. Since $A_M\subseteq V$ is an integral extension, $Q_i\cap A\neq Q_j\cap A$ if $i\neq j$.

For $i\in\{1,2,3\}$, let $\mathfrak{q}_i$ be the prime ideal of $D[X_1,\ldots,X_n]$ whose image in $A$ is $Q_i$; then, $\mathfrak{q}_1$, $\mathfrak{q}_2$ and $\mathfrak{q}_3$ are distinct, $\mathfrak{q}_i\cap D=P$ for each $i$, and the set of ideals between $\mathfrak{q}_1$ and $\mathfrak{q}_3$ is linearly ordered (by Lemma \ref{lemma:spec-linord}). However, the prime ideals of $D[X_1,\ldots,X_n]$ contracting to $P$ are in a bijective and order-preserving correspondence with the prime ideals of $F[X_1,\ldots,X_n]$, where $F$ is the quotient field of $D/P$; since $F[X_1,\ldots,X_n]$ is a Noetherian ring, there are an infinite number of prime ideals between the ideals corresponding to $\mathfrak{q}_1$ and $\mathfrak{q}_3$. This is a contradiction, and $|\iota_V^{-1}(P)|\leq 2$.

For the ``in particular'' statement, take a chain $(0)\subsetneq Q_1\subsetneq\cdots\subsetneq Q_k$ in $\Spec(V)$. Then, the corresponding chain of the $P_i:=Q_i\cap D$ has length at most $\dim(D)$, and moreover $\iota^{-1}((0))=\{(0)\}$. Hence, $k+1\leq 2\dim(D)+1$ and $\dim(V)\leq 2\dim(D)$.
\begin{figure}\label{fig:prop:leq2dim}
\begin{equation*}
\begin{tikzcd}
& D[X_1,\ldots,X_n]\arrow[two heads]{d}\arrow[hook]{r} & D[X_1,\ldots,X_n]_\mathfrak{a}\arrow[two heads]{d}\\
D\arrow[end anchor=south west,hook]{ur}\arrow[hook]{r} & A=D[a_1,\ldots,a_n]\arrow[hook]{r} & A_M\simeq\frac{D[X_1,\ldots,X_n]_\mathfrak{a}}{\mathfrak{b}}\arrow[hook]{r} & V
\end{tikzcd}
\end{equation*}
\caption{Rings involved in the proof of Proposition \ref{prop:leq2dim}.}
\end{figure}
\end{proof}

The \emph{valuative dimension} of $D$, indicated by $\dim_v(D)$, is defined as the supremum of the dimensions of the valuation overrings of $D$; we have always $\dim(D)\leq\dim_v(D)$, and $\dim_v(D)$ can be arbitrarily large with respect to $\dim(D)$ \cite[Section 30, Exercises 16 and 17]{gilmer}. In particular, with the notation of the previous proposition, the cardinality of $\iota_V^{-1}(P)$ can be arbitrarily large: for example, if $(D,\mathfrak{m})$ is local and one-dimensional, then $|\iota_V^{-1}(\mathfrak{m})|=\dim_v(D)$. 
\begin{cor}\label{cor:dimv-dim}
Let $D$ be an integral domain such that $\Zar(D)$ is Noetherian. Then, $\dim_v(D)\leq 2\dim(D)$.
\end{cor}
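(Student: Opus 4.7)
The plan is to combine Proposition \ref{prop:leq2dim} with the fact, already recorded in Section 2, that every valuation overring of $D$ contains a minimal one.

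First, since $\Zar(D)$ is Noetherian, every subspace of $\Zar(D)$ is compact; in particular, $\Zar(D)\setminus\{V\}$ is compact for each $V\in\Zarmin(D)$. Proposition \ref{prop:leq2dim} then immediately gives $\dim(V)\leq 2\dim(D)$ for every such $V$.

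Next I would reduce the general case to the minimal one. Given an arbitrary $W\in\Zar(D)$, there is some $V\in\Zarmin(D)$ with $V\subseteq W$. Because $V$ is a valuation domain with quotient field $K$, the classical description of overrings of a valuation domain says that any ring between $V$ and $K$ is a localization $V_P$ for a prime $P\in\Spec(V)$; moreover $\dim(V_P)=\mathrm{ht}(P)\leq\dim(V)$. Applying this to $W$ gives $\dim(W)\leq\dim(V)\leq 2\dim(D)$, and taking the supremum over $W\in\Zar(D)$ yields $\dim_v(D)\leq 2\dim(D)$.

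I do not anticipate any real obstacle: the substantive work is already contained in Proposition \ref{prop:leq2dim}, and the corollary follows once one invokes the passage from arbitrary valuation overrings to minimal ones via the standard fact that overrings of a valuation domain are localizations at primes (and hence have no larger dimension).
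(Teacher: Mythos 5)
Your proposal is correct and follows the paper's proof almost verbatim: use the Noetherian hypothesis to get compactness of $\Zar(D)\setminus\{V\}$ for each minimal $V$, apply Proposition \ref{prop:leq2dim} to bound $\dim(V)$, and then pass to arbitrary $W\in\Zar(D)$ via $\dim(W)\leq\dim(V)$ when $V\subseteq W$. The only difference is that you spell out the (standard) justification that $W$ is a localization of $V$ at a prime, which the paper leaves implicit.
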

\begin{proof}
If $\Zar(D)$ is Noetherian, then in particular $\Zar(D)\setminus\{V\}$ is compact for every $V\in\Zarmin(D)$. Hence, $\dim(V)\leq 2\dim(D)$ for every $V\in\Zarmin(D)$, by Proposition \ref{prop:leq2dim}; since, if $W\supseteq V$ are valuation domain, $\dim(W)\leq\dim(V)$, the claim follows.
\end{proof}

\begin{prop}\label{prop:leq2dim-meglio}
Let $D$ be an integral domain, and let $V\in\Zarmin(D)$ be such that $\Zar(D)\setminus\{V\}$ is compact; let $(0)\subsetneq P_1\subsetneq\cdots\subsetneq P_k$ be the chain of prime ideals of $V$ and let $Q_i:=P_i\cap D$. Denote by $ht(P)$ the height of the prime ideal $P$. Then:
\begin{enumerate}[(a)]
\item\label{prop:leq2dim-meglio:alt} for every $0\leq t\leq\dim(D)$, we have
\begin{equation*}
\dim(V)\leq\dim_v(D_{Q_t})+2(\dim(D)-ht(Q_t));
\end{equation*}
\item\label{prop:leq2dim-meglio:val} if $D_{Q_t}$ is a valuation domain, then
\begin{equation*}
\dim(V)\leq 2\dim(D)-ht(Q_t).
\end{equation*}
\end{enumerate}
\end{prop}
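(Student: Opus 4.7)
The plan is to split $\dim(V)=k$ as $t+(k-t)$ and bound the two pieces separately. For the lower part, I would observe that $V_{P_t}$ is a valuation overring of $D_{Q_t}$ in $K$: every $d\in D\setminus Q_t$ lies outside $P_t$ and is therefore a unit in $V_{P_t}$, and $\dim(V_{P_t})=t$ since its spectrum is $\{P_0,\ldots,P_t\}$. Hence $t\leq\dim_v(D_{Q_t})$. For the upper part, I would exploit Proposition \ref{prop:leq2dim}, which says each prime of $D$ is hit at most twice by $\iota_V$. The contractions $Q_{t+1},Q_{t+2},\ldots,Q_k$ form a chain in $\Spec(D)$ in which (eventually) distinct values lie strictly above $Q_t$, so they make up a subchain of $\Spec(D/Q_t)$ of length at most $\dim(D)-ht(Q_t)$, and each value occurs at most twice.

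The subtle point is a potential off-by-one. If $Q_t\neq Q_{t+1}$, every $Q_i$ for $i\geq t+1$ lies strictly above $Q_t$, and direct counting with multiplicity gives $k-t\leq 2(\dim(D)-ht(Q_t))$; together with $t\leq\dim_v(D_{Q_t})$, this yields (a) at once. If instead $Q_t=Q_{t+1}$, the counting argument only produces $k-t\leq 2(\dim(D)-ht(Q_t))+1$, losing one. However, in this case $P_t$ and $P_{t+1}$ both contract to $Q_t$, so $V_{P_{t+1}}$ is itself a valuation overring of $D_{Q_t}$ in $K$ of dimension $t+1$, sharpening the lower-part bound to $t+1\leq\dim_v(D_{Q_t})$ and compensating exactly. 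These two cases are exhaustive because Proposition \ref{prop:leq2dim} rules out $Q_t=Q_{t+1}=Q_{t+2}$, so equality cannot propagate.

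For part (b), I would use that a valuation ring $W$ of a field $L$ satisfies $\dim_v(W)=\dim(W)$: any valuation overring of $W$ in $L$ is a localization $W_P$ with $P\in\Spec(W)$, hence of dimension at most $\dim(W)$. Applied to $W=D_{Q_t}$, this substitutes $\dim_v(D_{Q_t})=ht(Q_t)$ into (a) and gives $\dim(V)\leq ht(Q_t)+2(\dim(D)-ht(Q_t))=2\dim(D)-ht(Q_t)$.

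The main delicate point I anticipate is the case split on whether $Q_t=Q_{t+1}$ — keeping the counts aligned to avoid the off-by-one. Everything else is bookkeeping built on the two ingredients above: the localization $V_{P_t}$ and the two-preimages bound of Proposition \ref{prop:leq2dim}.
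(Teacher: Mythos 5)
Your proof is correct, and it follows the same basic strategy as the paper's: split $\dim(V)$ at the index $t$, bound the lower piece via $t=\dim(V_{P_t})\leq\dim_v(D_{Q_t})$, and bound the upper piece via the two-preimages estimate of Proposition~\ref{prop:leq2dim} together with the obvious bound on chain length above $Q_t$ in $\Spec(D)$. Part~(b) is handled identically in both cases, by substituting $\dim_v(D_{Q_t})=\dim(D_{Q_t})=ht(Q_t)$. The one genuine difference is your explicit case split on whether $Q_t=Q_{t+1}$, and it is to your credit: the paper's proof passes through the intermediate inequality $\dim(V)\leq t+2(s-a)$, where $(0)\subsetneq Q^{(1)}\subsetneq\cdots\subsetneq Q^{(s)}$ is the chain of distinct nonzero contractions and $Q^{(a)}=Q_t$. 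That estimate silently assumes that only $P_0,\ldots,P_t$ contract into $Q_t$; if $P_{t+1}$ also contracts to $Q_t$, the count of primes lying over $Q^{(a+1)},\ldots,Q^{(s)}$ is $k-t-1$, not $k-t$, and the intermediate inequality can fail by exactly the one unit you worried about. Your compensating observation --- that in this case $V_{P_{t+1}}$ is a valuation overring of $D_{Q_t}$ of dimension $t+1$, so $\dim_v(D_{Q_t})\geq t+1$ --- absorbs the extra unit and recovers the stated bound. In short, your argument is not a different route but a more careful version of the paper's, and the case analysis you add is needed to make the intermediate step airtight.
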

\begin{proof}
\ref{prop:leq2dim-meglio:alt} Let $(0)\subsetneq Q^{(1)}\subsetneq Q^{(2)}\subsetneq\cdots\subsetneq Q^{(s)}$ be the chain $(0)\subseteq Q_1\subseteq\cdots\subseteq Q_k$ without the repetitions, and let $a$ be the index such that $Q^{(a)}=Q_t$. For every $b>a$, by the proof of Proposition \ref{prop:leq2dim} there can be at most two prime ideals of $V$ over $Q^{(b)}$; on the other hand, $V_{P_t}$ is a valuation overring of $D_{Q_t}$, and thus $t=\dim(V_{P_t})\leq\dim_v(D_{Q_t})$. Therefore,
\begin{equation*}
\dim(V)\leq t+2(s-a)\leq\dim_v(D_{Q_t})+2(\dim(D)-ht(Q_t))
\end{equation*}
since each ascending chain of prime ideals starting from $Q_t$ has length at most $\dim(D)-ht(Q_t)$.

Point \ref{prop:leq2dim-meglio:val} follows, since $\dim(V)=\dim_v(V)$ for every valuation domain $V$.
\end{proof}

\begin{ex}\label{ex:prufnoeth}
A class of integral domain whose Zariski space is Noetherian is constituted by the class of Pr\"ufer domains with Noetherian spectrum. Indeed, if $D$ is a Pr\"ufer domain then the valuation overrings of $D$ are exactly the localizations of $D$ at prime ideals; thus, the center map $\gamma$ estabilishes a homeomorphism between $\Zar(D)$ and $\Spec(D)$. Thus, if the latter is Noetherian also the former is Noetherian.

In this case, $\dim(D)=\dim_v(D)$.
\end{ex}

\begin{ex}\label{ex:dim12}
It is also possible to construct domains whose Zariski space is Noetherian but with $\dim(D)\neq\dim_v(D)$. For example, let $L$ be a field, and consider the ring $A:=L+YL(X)[[Y]]$, where $X$ and $Y$ are independent indeterminates. Then, the valuation overrings of $A$ different from $F:=L(X)((Y))$ are the rings in the form $V+YL(X)[[Y]]$, as $V$ ranges among the valuation rings containing $L$ and having quotient field $L(X)$; that is, $\Zar(A)\setminus\{F\}\simeq\Zar(L(X)|L)$. By the following Corollary \ref{cor:FL}, $\Zar(A)$ is a Noetherian space.

From this, we can construct analogous examples of arbitrarily large dimension. Indeed, if $R$ is an integral domain with quotient field $K$, and $T:=R+XK[[X]]$, then as above $\Zar(T)$ is composed by $K((X))$ and by rings of the form $V+XK[[X]]$, as $V$ ranges in $\Zar(R)$; in particular, $\Zar(T)=\{K((X))\}\cup\mathcal{X}$, where $\mathcal{X}\simeq\Zar(R)$. Thus, $\Zar(T)$ is Noetherian if $\Zar(R)$ is. Moreover, $\dim(T)=\dim(R)+1$ and $\dim_v(T)=\dim_v(R)+1$.

Consider now the sequence of rings $R_1:=L+YL(X)[[Y]]$, $R_2:=R_1+Y_2Q(R_1)[[Y_2]]$, \ldots, $R_n:=R_{n-1}+Y_nQ(R_{n-1})[[Y_n]]$, where $Q(R)$ indicates the quotient field of $R$ and each $Y_i$ is an indeterminate over $Q(R_{i-1})((Y_{i-1}))$. Recursively, we see that each $\Zar(R_n)$ is Noetherian, while $\dim(R_n)=n\neq n+1=\dim_v(R_n)$.
\end{ex}

\section{Intersections of prime ideals}\label{sect:intersection}
The results of the previous sections, while very general, are often difficult to apply, because it is usually not easy to determine the valuative dimension of a domain $D$. More applicable criteria, based on the prime spectrum of $D$, are the ones that we will prove next.
\begin{teor}\label{teor:intersez-P}
Let $D$ be a local integral domain, and suppose there is a set $\Delta\subseteq\Spec(D)$ and a prime ideal $Q$ such that:
\begin{enumerate}
\item $Q\notin\Delta$;
\item no two members of $\Delta$ are comparable;
\item $\bigcap\{P\mid P\in\Delta\}=Q$;
\item $D_Q$ is a valuation domain.
\end{enumerate}
Then, for any minimal valuation overring $V$ of $D$ contained in $D_Q$, $\Zar(D)\setminus\{V\}$ is not compact; in particular, $\Zar(D)$ is not Noetherian.
\end{teor}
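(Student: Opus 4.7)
I would argue by contradiction: assume $\Zar(D)\setminus\{V\}$ is compact. My first move is to observe that the hypothesis forces $V=D_Q$. Indeed, $D_Q\in\Zar(D)$ is a valuation overring containing $V$ (by hypothesis (4) together with $V\subseteq D_Q$), and minimality in $\Zar(D)$ coincides with maximality under inclusion, so the chain $V\subseteq D_Q$ collapses to equality. Then Theorem~\ref{teor:Zar-meno-V} supplies a finitely generated subalgebra $A=D[x_1,\ldots,x_n]\subseteq K$ and a maximal ideal $M\in\Max(A)$ such that $D_Q$ is the integral closure of $A_M$.

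The next step is to upgrade this to the sharper identity $A_M=D_Q$. For any $t\in D\setminus Q$, integrality of $1/t\in D_Q$ over $A_M$ produces an equation $(1/t)^n+a_1(1/t)^{n-1}+\cdots+a_n=0$ with $a_i\in A_M$; multiplying through by $t^n$ exhibits an explicit inverse of $t$ in $A_M$. Hence $D_Q=D[(D\setminus Q)^{-1}]\subseteq A_M$, and combined with the tautological $A_M\subseteq D_Q$ this gives $A_M=D_Q$. Consequently $MA_M=QD_Q$, so $M\cap D=Q$ and the residue field
\[
A/M \;=\; A_M/MA_M \;=\; D_Q/QD_Q \;=\; \mathrm{Frac}(D/Q)
\]
is a finitely generated $(D/Q)$-algebra (generated by the images of the $x_i$) that happens to be a field.

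The heart of the argument is the classical consequence of Zariski's lemma (a form of Hilbert's Nullstellensatz): if an integral domain $R$ admits a finitely generated $R$-algebra $S\supseteq R$ that is a field, then there exists $s\in R\setminus\{0\}$ with $R[1/s]=\mathrm{Frac}(R)$, equivalently every nonzero prime of $R$ contains $s$. Applied to $R=D/Q$ and $S=A/M$, this yields $s\in D\setminus Q$ such that every prime of $D$ strictly containing $Q$ contains $s$. But conditions (1) and (3) force $\Delta$ to be nonempty (the empty intersection would be $D\neq Q$) and every $P\in\Delta$ to satisfy $P\supsetneq Q$, so $s\in P$ for each such $P$, yielding
\[
s\in\bigcap_{P\in\Delta}P \;=\; Q,
\]
against $s\notin Q$. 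The ``in particular'' clause is immediate, since every subspace of a Noetherian space is compact. The main obstacle I anticipate is recognising that Zariski's lemma is exactly the device that converts the structural information $A/M=\mathrm{Frac}(D/Q)$ into a concrete element $s$ that can be played against $\bigcap\Delta=Q$; the rest consists of routine manipulations with integral extensions and localisations.
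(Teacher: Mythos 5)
Your opening move is incorrect, and the rest of the argument collapses with it. In this paper $\Zarmin(D)$ is the set of valuation overrings of $D$ that are minimal \emph{under set-theoretic inclusion} (the closed points of $\Zar(D)$), not minimal in the specialization order; this is the only reading consistent with the paper's sentence ``every $V\in\Zar(D)$ contains an element $W\in\Zarmin(D)$''. For a local domain $D$ these are precisely the valuation overrings dominating $D$, so the center of $V$ on $D$ is the maximal ideal $\mathfrak{m}$. On the other hand, under the hypotheses of the theorem $Q$ cannot be maximal: $\Delta$ is nonempty (the empty intersection would be $D$, not a prime) and every $P\in\Delta$ contains $Q$ and differs from $Q$ by (1) and (3), so $Q$ admits primes strictly above it. Hence $D_Q$ has center $Q\subsetneq\mathfrak{m}$, which is different from the center of $V$, and therefore $V\neq D_Q$. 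The containment $V\subseteq D_Q$ is genuinely strict in the typical case --- for instance if $D$ is a two-dimensional regular local ring, $Q=(0)$ and $\Delta$ is the set of height-one primes, then $D_Q=K$ while every $V\in\Zarmin(D)$ has dimension at least $2$.

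Once $V\neq D_Q$, the remainder of the proposal does not go through. Theorem~\ref{teor:Zar-meno-V} only gives that $V$ (not $D_Q$) is the integral closure of $A_M$; for $t\in D\setminus Q$ the element $1/t$ lies in $D_Q$ but need not lie in $V$, so there is no integral equation for $1/t$ over $A_M$ to exploit, and $A_M\neq D_Q$ in general. Moreover $M\cap D=\mathfrak{m}$ rather than $Q$, so $A/M$ is a finite extension of $D/\mathfrak{m}$ and the identity $A/M=\mathrm{Frac}(D/Q)$ fails, leaving no setting in which to invoke Zariski's lemma over $D/Q$. The paper's proof avoids all of this by a different route: it forms the conductor ideal $I:=(D:_Dx_1)\cap\cdots\cap(D:_Dx_n)$; if $I\subseteq Q$, flatness of $D_Q$ forces some $x_i\notin D_Q\supseteq V\supseteq A$, a contradiction, so $I\nsubseteq Q$; then two incomparable $P_1,P_2\in\Delta$ not containing $I$ are chosen, $\rad(P_2A)\subseteq\rad(P_1A)$ is arranged via Lemma~\ref{lemma:spec-linord}, and an element $t\in P_2\setminus P_1$ together with $r\in I\setminus P_1$ produces $r^dt^e\in P_1$, contradicting primality of $P_1$. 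The Zariski-lemma idea is appealing, but it is applied to the wrong ring; you would need to first establish an honest identification of $A_M$ with a localization of $D$, and the hypotheses simply do not give that.
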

\begin{proof}
Note first that, since $V$ is a minimal valuation overring, its center $M$ on $D$ must be the maximal ideal of $D$ \cite[Corollary 19.7]{gilmer}. Suppose that $\Zar(D)\setminus\{V\}$ is compact: by Theorem \ref{teor:Zar-meno-V}, there is a finitely generated $D$-algebra $A:=D[x_1,\ldots,x_n]$ such that $V$ is the integral closure of $A_M$ for some $M\in\Max(A)$. 

Let $I:=x_1^{-1}D\cap\cdots\cap x_n^{-1}D\cap D=(D:_Dx_1)\cap\cdots\cap(D:_Dx_n)$. If $I\subseteq Q$, then $(D:_Dx)\subseteq Q$ for some $x_i:=x$; then, since $D_Q$ is flat over $D$,
\begin{equation*}
(D_Q:_{D_Q}x)=(D:_Dx)D_Q\subseteq QD_Q,
\end{equation*}
and in particular $x\notin D_Q$. However, $V\subseteq D_Q$, and thus $x\notin V$, a contradiction. Hence, we must have $I\nsubseteq Q$.

In this case, there must be a prime ideal $P_1\in\Delta$ not containing $I$. Moreover, $I\cap P_1\nsubseteq Q$ too, and thus there is another prime $P_2\in\Delta$, $P_1\neq P_2$, not containing $I$. By Lemma \ref{lemma:spec-linord}, the prime ideals of $A$ inside $M$ are linearly ordered; in particular, we can suppose without loss of generality that $\rad(P_2A)\subseteq\rad(P_1A)$.

Let now $t\in P_2\setminus P_1$; then, $t\in\rad(P_1A)$, and thus there are $p_1,\ldots,p_k\in P_1$, $a_1,\ldots,a_n\in A$ such that $t^e=p_1a_1+\cdots+p_ka_k$ for some positive integer $e$. For each $i$, $a_i=B_i(x_1,\ldots,x_n)$, where $B_i$ is a polynomial over $D$ of total degree $d_i$; let $d:=\sup\{d_1,\ldots,d_k\}$, and take an $r\in I\setminus P_1$ (recall that $I\nsubseteq P_1$). Then, $r^dB_i(x_1,\ldots,x_n)\in D$ for each $i$; therefore,
\begin{equation*}
r^dt^e=p_1r^da_1+\cdots+p_kr^da_k\in p_1D+\cdots+p_kD\subseteq P_1.
\end{equation*}
However, by construction, both $r$ and $t$ are out of $P_1$; since $P_1$ is prime, this is impossible. Hence, $\Zar(D)\setminus\{V\}$ is not compact, and $\Zar(D)$ is not Noetherian.
\end{proof}

The first corollaries of this result can be obtained simply by putting $Q=(0)$. Recall that a \emph{G-domain} (or \emph{Goldman domain}) is an integral domain such that the intersection of all nonzero prime ideals is nonzero. They were introduced by Kaplansky for giving a new proof of Hilbert's Nullstellensatz (see for example \cite[Section 1.3]{kaplansky}).
\begin{cor}\label{cor:Goldman}
Let $D$ be a local domain of finite dimension, and suppose that $D$ is not a G-domain. Then, $\Zar(D)\setminus\{V\}$ is not compact for every $V\in\Zarmin(D)$.
\end{cor}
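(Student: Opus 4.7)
The plan is to apply Theorem~\ref{teor:intersez-P} with $Q=(0)$ and $\Delta$ equal to the set of all height-one prime ideals of $D$. This choice makes $D_Q=K$ trivially a valuation domain, so condition~(4) is automatic; moreover, the conclusion ``for any $V\in\Zarmin(D)$ contained in $D_Q$'' becomes simply ``for every $V\in\Zarmin(D)$'', since every valuation overring of $D$ is contained in $K$.

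Conditions~(1) and~(2) are immediate: $(0)\notin\Delta$ because the elements of $\Delta$ are by definition nonzero, and distinct height-one primes are pairwise incomparable. The only real content is condition~(3), namely $\bigcap_{P\in\Delta}P=(0)$. Here both hypotheses are used at once: since $\dim(D)<\infty$, every nonzero prime ideal of $D$ sits above some height-one prime, so
\begin{equation*}
\bigcap_{P\in\Delta}P \;\subseteq\; \bigcap_{P'\in\Spec(D)\setminus\{(0)\}}P' \;=\; (0),
\end{equation*}
where the last equality is precisely the statement that $D$ is not a G-domain. One should also check that $\Delta$ is nonempty, since otherwise its intersection would be all of $D$: but a field is a G-domain by convention, so $D$ has some nonzero prime, and finite-dimensionality then produces a height-one prime below it.

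I do not anticipate a serious obstacle; the real work has already been done in Theorems~\ref{teor:Zar-meno-V} and~\ref{teor:intersez-P}, and the argument here is merely to choose $Q=(0)$ and $\Delta$ equal to the minimal nonzero primes, matching the ``not a G-domain'' hypothesis to condition~(3) and the finite-dimensionality to the reduction from arbitrary nonzero primes down to height-one primes.
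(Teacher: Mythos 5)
Your proof is correct and is essentially identical to the paper's: both take $Q=(0)$ and $\Delta=\Spec^1(D)$ and apply Theorem~\ref{teor:intersez-P}, using finite dimension to reduce from all nonzero primes to height-one primes and the non-G-domain hypothesis to get the intersection down to $(0)$. You are slightly more careful than the paper in spelling out conditions (1), (2), and the nonemptiness of $\Delta$, but the idea and the route are the same.
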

\begin{proof}
Since $D$ is finite-dimensional, every prime ideal of $D$ contains a prime ideal of height 1; since $D$ is not a G-domain, it follows that the intersection of the set $\Spec^1(D)$ of the height-1 prime ideals of $D$ is $(0)$. The localization $D_{(0)}$ is the quotient field of $D$, and thus a valuation domain; therefore, we can apply Theorem \ref{teor:intersez-P} to $\Delta:=\Spec^1(D)$.
\end{proof}

\begin{cor}\label{cor:h1}
Let $D$ be a local domain. If $D$ has infinitely many height-1 primes, then $\Zar(D)$ is not Noetherian.
\end{cor}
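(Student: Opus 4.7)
My plan is to apply Theorem~\ref{teor:intersez-P} with $\Delta$ equal to the set of all height-$1$ prime ideals of $D$ and $Q:=(0)$. By hypothesis $\Delta$ is infinite; its members are pairwise incomparable (all being of height $1$); $Q=(0)\notin\Delta$; and $D_{(0)}$ is the quotient field $K$, which is trivially a valuation domain. The only hypothesis of Theorem~\ref{teor:intersez-P} that is not immediate is therefore $\bigcap_{P\in\Delta}P=(0)$. If this intersection does vanish, Theorem~\ref{teor:intersez-P} immediately yields that $\Zar(D)\setminus\{V\}$ is not compact for every $V\in\Zarmin(D)$ (since every valuation overring is contained in $K=D_Q$), and in particular $\Zar(D)$ is not Noetherian.

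The real work is in the case where $\bigcap_{P\in\Delta}P\neq(0)$. I would pick $0\neq x$ in the intersection and argue that $(x)$ has infinitely many minimal primes, which contradicts the Noetherianness of $\Spec(D)$. A routine Zorn's lemma argument shows that every prime containing $(x)$ contains some minimal prime of $(x)$; applying this to any height-$1$ prime $P\in\Delta$, the resulting minimal prime is nonzero (as it contains $x$) and contained in $P$, hence equal to $P$ by the height-$1$ condition. Thus every member of $\Delta$ is a minimal prime of $(x)$, so $\Spec(D)$ fails to be a Noetherian space, and by Proposition~\ref{prop:spec-noeth} neither is $\Zar(D)$.

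The main subtlety is exactly this dichotomy: Theorem~\ref{teor:intersez-P} requires $D_Q$ to be a valuation domain, so $Q=(0)$ is the natural choice that makes this condition automatic, and it forces a separate treatment when the intersection of the height-$1$ primes is nonzero. That remaining case is resolved by the observation that a nonzero element of the intersection gives an ideal with infinitely many minimal primes, which via Proposition~\ref{prop:spec-noeth} is incompatible with $\Zar(D)$ being Noetherian.
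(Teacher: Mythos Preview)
Your proof is correct and follows essentially the same approach as the paper: both split into cases according to whether the intersection of the height-$1$ primes is zero, applying Theorem~\ref{teor:intersez-P} with $Q=(0)$ in the first case and invoking Proposition~\ref{prop:spec-noeth} (via the fact that a nonzero intersection forces infinitely many minimal primes over a proper ideal) in the second. The only cosmetic difference is that the paper argues minimality over the ideal $I=\bigcap_{P\in\Delta}P$ itself rather than over a principal ideal $(x)$ with $x\in I$.
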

\begin{proof}
Let $I$ be the intersection of all height-1 prime ideals. If $I\neq(0)$, every height-one prime of $D$ would be minimal over $I$; since there is an infinite number of them, $\Spec(D)$ would not be Noetherian, and by Proposition \ref{prop:spec-noeth} neither $\Zar(D)$ would be Noetherian. Hence, $I=(0)$. But then we can apply Theorem \ref{teor:intersez-P} (for $Q=I$).
\end{proof}

Note that the hypothesis that $D$ is local is needed in Theorem \ref{teor:intersez-P} and in Corollary \ref{cor:h1}: for example, $\insZ$ has infinitely many height-1 primes, and $\bigcap\{P\mid P\in\Spec^1(D)\}=(0)$, but $\Zar(\insZ)\simeq\Spec(\insZ)$ is a Noetherian space.

\begin{prop}\label{prop:polynomial}
Let $D$ be an integral domain. If $D$ is not a field, then $\Zar(D[X])$ is a not a Noetherian space.
\end{prop}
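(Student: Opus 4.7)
The plan is to reduce to a local ring and invoke Corollary \ref{cor:h1}: I aim to find a maximal ideal $M$ of $D[X]$ such that $E := D[X]_M$ has infinitely many height-one primes, so that $\Zar(E)$ is non-Noetherian, and then transfer non-Noetherianness by observing that $\Zar(E)$ is a topological subspace of $\Zar(D[X])$. Since $D$ is not a field, I pick a nonzero non-unit $b \in D$ and a maximal ideal $\mathfrak{p}$ of $D$ containing $b$, and set $M := \mathfrak{p}D[X] + XD[X]$; this is maximal in $D[X]$ because $D[X]/M \cong D/\mathfrak{p}$ is a field. Let $E := D[X]_M$, a local integral domain whose quotient field coincides with $K(X)$, where $K$ is the quotient field of $D$.

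The main step is to exhibit infinitely many height-one primes of $E$. For each $k \geq 1$ the element $X - b^k$ lies in $M$ (since $b^k \in \mathfrak{p}$), so $Q_k := (X - b^k)E$ is a proper ideal. The ideal $(X - b^k)D[X]$ is prime: polynomial division by the monic polynomial $X - b^k$ gives $(X - b^k)K[X] \cap D[X] = (X - b^k)D[X]$, and the quotient embeds into the field $K[X]/(X - b^k) \cong K$. Any prime $P$ of $D[X]$ with $(0) \subsetneq P \subsetneq (X - b^k)D[X]$ contracts to $(0)$ in $D$, since $(X - b^k)D[X] \cap D = (0)$ by a degree argument; hence $P$ extends to a nonzero prime of $K[X]$ strictly contained in the maximal ideal $(X - b^k)K[X]$, which is impossible in a PID. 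Localizing at $M$, $Q_k$ is a height-one prime of $E$. Because $b$ is a nonzero non-unit in a domain, the powers $b^k$ are pairwise distinct, and a degree argument shows the $Q_k$ are pairwise distinct as well.

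Since $E$ is local with infinitely many height-one primes, Corollary \ref{cor:h1} implies that $\Zar(E)$ is not a Noetherian space. The inclusion $D[X] \subseteq E$ yields $\Zar(E) \subseteq \Zar(D[X])$ as sets, and the Zariski topologies on both are generated by the same subbasic sets $B(x_1,\ldots,x_n)$ of elements of the common quotient field $K(X)$; hence $\Zar(E)$ is a topological subspace of $\Zar(D[X])$. Since any subspace of a Noetherian space is Noetherian, $\Zar(D[X])$ cannot be Noetherian.

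The main obstacle is checking the height-one and distinctness properties of the $Q_k$ without any Noetherian hypothesis on $D$; the uniform device is to exploit the fact that $X - b^k$ is monic, which makes the interplay between $D[X]$ and $K[X]$ transparent.
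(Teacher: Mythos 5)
Your proof is correct and follows essentially the same route as the paper: pick a prime of $D[X]$ of the form $\mathfrak{p}D[X]+XD[X]$, produce infinitely many height-one primes generated by linear monics $X-c$ with $c$ in a fixed prime of $D$ contained in $\mathfrak{p}$, localize, and invoke Corollary~\ref{cor:h1}. The only cosmetic difference is that you take $c=b^k$ for powers of a single nonzero non-unit, whereas the paper ranges $c$ over all elements of a nonzero prime ideal $P$; your write-up also spells out the height-one and distinctness verifications, which the paper leaves implicit.
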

\begin{proof}
Since $D$ is not a field, there exist a nonzero prime ideal $P$ of $D$. For any $a\in P$, let $\mathfrak{p}_a$ be the ideal of $D[X]$ generated by $X-a$; then, each $\mathfrak{p}_a$ is a prime ideal of height 1, $\mathfrak{p}_a\neq\mathfrak{p}_b$ if $a\neq b$, and $\bigcap\{\mathfrak{p}_a\mid a\in P\}=(0)$.

The prime ideal $\mathfrak{m}:=PD[X]+XD[X]$ contains every $\mathfrak{p}_a$; by Corollary \ref{cor:h1}, $\Zar(D[X]_\mathfrak{m})$ is not Noetherian. Therefore, neither $\Zar(D[X])$ is Noetherian.
\end{proof}

\begin{cor}\label{cor:FL}
Let $F\subseteq L$ be a transcendental field extension.
\begin{enumerate}[(a)]
\item\label{cor:FL:1} If $\trdeg_F(L)=1$ and $L$ is finitely generated over $F$ then $\Zar(L|F)$ is Noetherian.
\item\label{cor:FL:>1} If $\trdeg_F(L)>1$ then $\Zar(L|F)$ is not Noetherian.
\end{enumerate}
\end{cor}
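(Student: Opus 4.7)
My plan is to handle the two parts separately: for (a) I cover $\Zar(L|F)$ by the Zariski spaces of two Dedekind overrings, while for (b) I reduce to Proposition~\ref{prop:polynomial} via a continuous surjection followed by a subspace inclusion.

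For (a), I fix $t\in L$ transcendental over $F$; since $L/F$ is finitely generated of transcendence degree $1$, the extension $L/F(t)$ is finite. Let $\tilde A$ and $\tilde B$ denote the integral closures in $L$ of $F[t]$ and $F[t^{-1}]$, respectively. By Krull--Akizuki, each of $\tilde A$, $\tilde B$ is a one-dimensional Noetherian integrally closed domain with quotient field $L$, i.e.\ a Dedekind domain. Any $V\in\Zar(L|F)$ contains either $t$ or $t^{-1}$; since $V$ is integrally closed in $L$, this forces $\tilde A\subseteq V$ or $\tilde B\subseteq V$, whence
\begin{equation*}
\Zar(L|F)=\Zar(L|\tilde A)\cup\Zar(L|\tilde B).
\end{equation*}
Each piece is homeomorphic to the Noetherian spectrum of the corresponding Dedekind domain by Example~\ref{ex:prufnoeth}. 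A finite union of Noetherian subspaces of a topological space is itself Noetherian (an ascending chain of open sets stabilizes on each piece, hence on the union), so $\Zar(L|F)$ is Noetherian.

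For (b), I choose $t_1,t_2\in L$ algebraically independent over $F$ and set $K:=F(t_1,t_2)$. The restriction map
\begin{equation*}
\rho\colon\Zar(L|F)\longrightarrow\Zar(K|F),\qquad V\longmapsto V\cap K,
\end{equation*}
is well-defined (intersecting a valuation ring of $L$ with a subfield gives a valuation ring of the subfield), surjective by the extension theorem for valuations to an overfield, and continuous, because for $y_i\in K$ the preimage of the subbasic open $B(y_1,\dots,y_m)\subseteq\Zar(K|F)$ is the subbasic open $B(y_1,\dots,y_m)\subseteq\Zar(L|F)$. Since continuous images of Noetherian spaces are Noetherian, it suffices to show that $\Zar(K|F)$ is not Noetherian; and since subspaces of Noetherian spaces are Noetherian, it further suffices to exhibit one non-Noetherian subspace. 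The subspace $\Zar(K|F[t_1,t_2])$ coincides with $\Zar(F[t_1,t_2])$ (as $K$ is the quotient field of $F[t_1,t_2]$), and this is not Noetherian by Proposition~\ref{prop:polynomial} applied to $D=F[t_1]$, which is not a field because $t_1$ is transcendental over $F$.

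The step that requires the most care is the Dedekind conclusion in (a): Krull--Akizuki is what allows the passage from the finite algebraic extension $L/F(t)$ to a Noetherian integral closure in arbitrary characteristic, and it is precisely here that the finite-generation hypothesis on $L/F$ enters, by guaranteeing $[L:F(t)]<\infty$.
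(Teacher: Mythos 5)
Your proof is correct and takes essentially the same route as the paper: in (a) the paper also picks a transcendence-basis element $\alpha_1$, decomposes $\Zar(L|F)=\Zar(L|F[\alpha_1])\cup\Zar(L|F[\alpha_1^{-1}])$, passes to integral closures in $L$ (Dedekind domains, with finiteness of $L/F(\alpha_1)$ playing the role you flag), and concludes via Example~\ref{ex:prufnoeth}; in (b) the paper likewise uses the restriction $V\mapsto V\cap F(X,Y)$, the fact that continuous images and subspaces of Noetherian spaces are Noetherian, and Proposition~\ref{prop:polynomial} applied to $F[X,Y]=F[X][Y]$. Your explicit appeal to Krull--Akizuki in (a) makes the characteristic-free Dedekind conclusion more transparent, but the argument is the same.
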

\begin{proof}
\ref{cor:FL:1} Let $L=F(\alpha_1,\ldots,\alpha_n)$; without loss of generality we can suppose that $\alpha_1$ is transcendental over $F$. Then, the extension $F(\alpha_1)\subseteq L$ is algebraic and finitely generated, and thus finite.

Each $V\in\Zar(L|F)$ must contain either $\alpha_1$ or $\alpha_1^{-1}$; therefore, $\Zar(L|F)=\Zar(L|F[\alpha_1])\cup\Zar(L|F[\alpha_1^{-1}])$. However, $\Zar(L|A)=\Zar(A')$ for every domain $A$, where we denote by $A'$ is the integral closure of $A$ in $L$; since $F[\alpha_1]$ (respectively, $F[\alpha_1^{-1}]$) is a principal ideal domain and $F(\alpha_1)\subseteq L$ is finite, the integral closure of $F[\alpha_1]$ (resp., $F[\alpha^{-1}]$) is a Dedekind domain, and thus $\Zar(L|F[\alpha_1])=\Zar(F[\alpha_1]')\simeq\Spec(F[\alpha_1]')$ is Noetherian. Being the union of two Noetherian spaces, $\Zar(L|F)$ is itself Noetherian.

\ref{cor:FL:>1} Suppose $\trdeg_F(L)>1$. Then, there are $X,Y\in L$ such that $\{X,Y\}$ is an algebraically independent set over $F$; in particular, we have a continuous surjective map $\Zar(L|F)\longrightarrow\Zar(F(X,Y)|F)$ given by $V\mapsto V\cap F(X,Y)$. However, $\Zar(F(X,Y)|F)$ contains $\Zar(F[X,Y])$; by Proposition \ref{prop:polynomial}, the latter is not Noetherian, since $F[X,Y]$ is the polynomial ring over $F[X]$, a domain of dimension 1. Thus, $\Zar(L|F)$ is not Noetherian.
\end{proof}

The condition that $\bigcap\{P\mid P\in\Delta\}=Q$ of Theorem \ref{teor:intersez-P} can be slightly generalized, requiring only that the intersection is contained in $Q$. However, doing so we can only prove that $\Zar(D)$ is not Noetherian, without always finding a specific $V$ such that $\Zar(D)\setminus\{V\}$ is not compact.
\begin{prop}\label{prop:intersez-contenuto}
Let $D$ be a local integral domain, and suppose there is a set $\Delta\subseteq\Spec(D)$ and a prime ideal $Q$ such that:
\begin{enumerate}
\item $Q\notin\Delta$;
\item no two members of $\Delta$ are comparable;
\item $\bigcap\{P\mid P\in\Delta\}\subseteq Q$;
\item $D_Q$ is a valuation domain.
\end{enumerate}
Then, $\Zar(D)$ is not Noetherian.
\end{prop}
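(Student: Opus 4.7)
The plan is to deduce the conclusion from Theorem~\ref{teor:intersez-P} after replacing $Q$ by a minimal prime of $I := \bigcap_{P \in \Delta} P$. If $\Spec(D)$ is not Noetherian, then $\Zar(D)$ is not Noetherian by Proposition~\ref{prop:spec-noeth}; so I may assume $\Spec(D)$ is Noetherian. Then $I$, being a radical ideal, decomposes as $I = \mathfrak{q}_1 \cap \cdots \cap \mathfrak{q}_r$ for its finitely many minimal primes. Because the primes of $D$ contained in $Q$ form a chain (as $D_Q$ is a valuation domain) while distinct $\mathfrak{q}_j$ are pairwise incomparable, there is a unique index $i$ with $\mathfrak{q} := \mathfrak{q}_i \subseteq Q$; moreover $D_\mathfrak{q} = (D_Q)_{\mathfrak{q} D_Q}$ is again a valuation domain.

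I now set $\Delta' := \{P \in \Delta : P \supseteq \mathfrak{q}\}$, an antichain, and claim that $\bigcap_{P \in \Delta'} P = \mathfrak{q}$. The inclusion $\supseteq$ is immediate. For the other direction, pick $y \in \bigcap_{j \neq i} \mathfrak{q}_j$ with $y \notin \mathfrak{q}$ --- which exists by prime avoidance when $r \geq 2$, and one may set $y = 1$ when $r = 1$. Given any $x \notin \mathfrak{q}$, the product $xy$ lies outside $\mathfrak{q} = \mathfrak{q}_i$ but inside every other $\mathfrak{q}_j$, so $xy \notin I$; hence some $P \in \Delta$ omits $xy$. If $P \notin \Delta'$, then $P \supseteq I$ and primality forces $P \supseteq \mathfrak{q}_k$ for some $k \neq i$, so $y \in \mathfrak{q}_k \subseteq P$ and $xy \in P$, a contradiction. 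Thus $P \in \Delta'$ and $x \notin P$ (otherwise $xy \in P$).

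Hence $\Delta'$ and $\mathfrak{q}$ satisfy conditions (2)--(4) of Theorem~\ref{teor:intersez-P}. Provided that $\mathfrak{q} \notin \Delta$, condition (1) also holds, and Theorem~\ref{teor:intersez-P} yields a minimal valuation overring $V \subseteq D_\mathfrak{q}$ with $\Zar(D) \setminus \{V\}$ non-compact; in particular $\Zar(D)$ is not Noetherian. The one delicate case is $\mathfrak{q} \in \Delta$: the antichain property then collapses $\Delta'$ to $\{\mathfrak{q}\}$ and the reduction fails directly. I expect this to be the main obstacle, to be handled by a separate short argument, for instance by running the proof of Theorem~\ref{teor:intersez-P} with a carefully chosen $V \in \Zarmin(D)$ below $D_\mathfrak{q}$ and exploiting that, since $\mathfrak{q} \subsetneq Q$, the primes in $\Delta \setminus \{\mathfrak{q}\}$ must lie outside $Q$ and so furnish the ``second prime'' needed to close the contradiction in that proof.
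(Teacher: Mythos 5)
Your plan follows the same route as the paper: reduce to the case $\Spec(D)$ Noetherian via Proposition~\ref{prop:spec-noeth}, replace $Q$ by a minimal prime $\mathfrak{q}$ of $I=\bigcap_{P\in\Delta}P$ contained in $Q$ (using that overrings of a valuation domain are valuation domains), restrict to $\Delta':=\{P\in\Delta:\mathfrak{q}\subseteq P\}$, prove $\bigcap\Delta'=\mathfrak{q}$, and invoke Theorem~\ref{teor:intersez-P}. For the intersection claim your argument is a direct one using a carefully chosen element $y$; the paper instead writes $\Delta=\Delta_1\cup\cdots\cup\Delta_n$, $I=I_1\cap\cdots\cap I_n$ indexed by the minimal primes of $I$ and deduces $I_1=Q$ from primality of $Q$. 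Both arguments are correct and essentially equivalent in spirit.

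The genuine difference is the case you flag and leave open, namely $\mathfrak{q}\in\Delta$. Your instinct that this is ``the main obstacle'' is sound --- in fact the paper's own proof silently ignores it: after showing $I_1=Q$ (in the paper's notation), it asserts that $\Delta_1$ ``satisfies the hypothesis of Theorem~\ref{teor:intersez-P}'', but condition (1) of that theorem requires $\mathfrak{q}\notin\Delta_1$, which is the same as $\mathfrak{q}\notin\Delta$, and this is not a consequence of the hypotheses as stated (only the original $Q$, which is strictly larger, is assumed outside $\Delta$). Your proposed repair --- using a prime in $\Delta\setminus\{\mathfrak{q}\}$ as the ``second prime'' in the proof of Theorem~\ref{teor:intersez-P} --- cannot work in full generality: if $\mathfrak{q}\in\Delta$ then the incomparability hypothesis forces $\Delta'=\{\mathfrak{q}\}$, and when $\Delta$ itself is a singleton there is no second prime to use at all. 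Indeed, taking $D$ a finite-dimensional valuation domain of dimension $\geq 2$, $\Delta=\{(0)\}$ and $Q$ a nonzero nonmaximal prime satisfies all four hypotheses, yet $\Zar(D)\cong\Spec(D)$ is finite and hence Noetherian. So the case you isolated is not a technicality that a ``short argument'' will dispose of; rather it marks a degenerate situation in which the statement needs an extra nondegeneracy hypothesis (for instance that the minimal prime of $\bigcap_{P\in\Delta}P$ inside $Q$ does not itself belong to $\Delta$, which is automatic in all of the paper's applications). In short: your argument matches the paper's where the paper is complete, and the step you could not finish is precisely the step the paper also elides.
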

\begin{proof}
If $\Spec(D)$ is not Noetherian, by Proposition \ref{prop:spec-noeth} neither is $\Zar(D)$; suppose that $\Spec(D)$ is Noetherian.

Let $I:=\bigcap\{P\mid P\in\Delta\}$; since an overring of a valuation domain is still a valuation domain, we can suppose that $Q$ is a minimal prime of $I$. Since $D$ has Noetherian spectrum, the radical ideal $I$ has only a finite number of minimal primes, say $Q=:Q_1,Q_2,\ldots,Q_n$; let $\Delta_i:=\{\mathfrak{p}\in\Delta\mid Q_i\subseteq \mathfrak{p}\}$ and $I_i:=\bigcap\{\mathfrak{p}\mid \mathfrak{p}\in\Delta_i\}$. By standard properties of minimal primes, $\Delta=\Delta_1\cup\cdots\cup\Delta_n$ and $I=I_1\cap\cdots\cap I_n$.

In particular, $I_1\cap\cdots\cap I_n\subseteq Q$; hence, $I_k\subseteq Q$ for some $k$. However, $Q_k\subseteq I_k$, and thus $Q_k\subseteq Q$; since different minimal primes of the same ideal are not comparable, $k=1$ and $Q\subseteq I_1\subseteq Q$, i.e., $I_1=Q$. Then, $\Delta_1$ is a family of primes satisfying the hypothesis of Theorem \ref{teor:intersez-P}; in particular, $\Zar(D)$ is not Noetherian.
\end{proof}

An \emph{essential prime} of a domain $D$ is a $P\in\Spec(D)$ such that $D_P$ is a valuation domain. $D$ is an \emph{essential domain} if it is equal to the intersection of the localizations of $D$ at the essential primes. If, moreover, the family of the essential primes is compact, then $D$ can be called a \emph{Pr\"ufer $v$-multiplication domain} (\emph{P$v$MD} for short) \cite[Corollary 2.7]{fin-tar-PvMD-ess}; note that the original definition of P$v$MDs was given through star operations (more precisely, $D$ is a P$v$MD if and only if $D_P$ is a valuation ring for every $t$-maximal ideal $P$ \cite{griffin_vmultiplication_1967,kang_pvmd}).
\begin{prop}\label{prop:PvMD}
Let $D$ be an essential domain. Then, $\Zar(D)$ is Noetherian if and only if $D$ is a Pr\"ufer domain with Noetherian spectrum.
\end{prop}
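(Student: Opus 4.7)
The plan is to prove both implications. The backward direction $(\Leftarrow)$ is immediate from Example~\ref{ex:prufnoeth}. For the forward direction, assume $D$ is essential with $\Zar(D)$ Noetherian. Proposition~\ref{prop:spec-noeth} already yields $\Spec(D)$ Noetherian, so the remaining task is to show that $D$ is Pr\"ufer, i.e., $D_M$ is a valuation domain for every maximal ideal $M$.

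First I would reduce to the local case. Fix a maximal ideal $M$; then $\Zar(D_M)=\{V\in\Zar(D):V\supseteq D_M\}$ is a subspace of $\Zar(D)$, hence Noetherian, and $D_M$ remains essential (stability of the essential-domain property under localization is a standard fact, the essential primes of $D_M$ being exactly the $PD_M$ with $P\in\mathcal{E}(D)$, $P\subseteq M$). Replacing $D$ by $D_M$, I may assume $D$ is local with maximal ideal $\mathfrak{m}$, and I want to show $D$ is a valuation domain; suppose, toward a contradiction, that it is not, so $\mathfrak{m}$ is not essential.

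Let $\Delta$ be the set of maximal essential primes of $D$. A routine Zorn argument (an ascending union of essential primes $P_\alpha$ is essential, because $D_P$ is then the descending intersection of the $D_{P_\alpha}$, and a descending chain of valuation overrings intersects to a valuation overring) gives $D=\bigcap_{P\in\Delta}D_P$; here $\Delta$ is an antichain, and $\mathfrak{m}\notin\Delta$. Set $I:=\bigcap_{P\in\Delta}P$. Since $\Spec(D)$ is Noetherian, the radical ideal $I$ has only finitely many minimal primes $Q_1,\ldots,Q_n$, with $I=Q_1\cap\cdots\cap Q_n$. Each $Q_j$ is contained in some $P\in\Delta$, so $D_{Q_j}$ is a localization of the valuation domain $D_P$, and hence itself a valuation domain.

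The argument now splits into two cases. If some $Q_j\notin\Delta$, then the four hypotheses of Proposition~\ref{prop:intersez-contenuto} hold for the pair $(\Delta,Q_j)$ (antichain; $\bigcap_{P\in\Delta}P=I\subseteq Q_j$; $Q_j\notin\Delta$; $D_{Q_j}$ a valuation domain), so $\Zar(D)$ is not Noetherian, a contradiction; the degenerate possibility $I=(0)$ falls under this case with $Q_1=(0)\notin\Delta$, since $D$ is not a field. Otherwise every $Q_j$ lies in $\Delta$, and since $\Delta$ is an antichain whose members each contain $I$ hence some $Q_j$, we conclude $\Delta=\{Q_1,\ldots,Q_n\}$ is finite. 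Then $\mathfrak{m}$ is contained in none of the $Q_j$, so prime avoidance produces $x\in\mathfrak{m}\setminus(Q_1\cup\cdots\cup Q_n)$; this $x$ is a unit in each $D_{Q_j}$, whence $x^{-1}\in\bigcap_j D_{Q_j}=D$, contradicting $x\in\mathfrak{m}$. The main point I would want to be careful about in the write-up is precisely the reduction to the local case, i.e., justifying the invariance of essentiality under localization at a maximal ideal.
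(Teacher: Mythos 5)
Your argument follows the paper's route, but the reduction to the local case contains a genuine gap. The assertion that ``stability of the essential-domain property under localization is a standard fact'' is false: Heinzer constructed an essential domain with a non-essential localization (\emph{An essential integral domain with a nonessential localization}, Canad.\ J.\ Math.\ 33 (1981), 400--403), and this failure is exactly what separates essential domains from P$v$MDs. The paper closes the gap by first noting that the set $\mathcal{E}$ of essential primes is compact (since $\Spec(D)$ is Noetherian, every subspace of it is compact), so that $D$ is a P$v$MD by \cite[Corollary 2.7]{fin-tar-PvMD-ess}, and only \emph{then} localizing, invoking \cite[Theorem 3.11]{kang_pvmd} (localizations of P$v$MDs are P$v$MDs). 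You need to insert this P$v$MD step before replacing $D$ by $D_M$; without it the claim ``$D_M$ remains essential'' is unjustified.

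A secondary issue: the parenthetical justification of the Zorn step is also incorrect. In general $D_{\bigcup_\alpha P_\alpha}$ is a \emph{proper} subring of $\bigcap_\alpha D_{P_\alpha}$, because for $x$ in the intersection the conductor $(D:_D x)$ escapes each $P_\alpha$ individually but may still be contained in $\bigcup_\alpha P_\alpha$; so an increasing union of essential primes need not be essential. In your setting this is harmless --- $\Spec(D)$ Noetherian gives the ascending chain condition on prime ideals, so chains of essential primes stabilize and trivially admit essential upper bounds --- but you should say that rather than the false general claim; the paper instead extracts maximal elements of $\mathcal{E}$ directly from its compactness. Once these two points are repaired, the remainder of your argument --- each minimal prime $Q_j$ of $I$ lies below some member of $\Delta$, so $D_{Q_j}$ is a valuation domain, and then the dichotomy between some $Q_j\notin\Delta$ (apply Proposition~\ref{prop:intersez-contenuto}) and all $Q_j\in\Delta$ (forcing $\Delta$ finite, disposed of by prime avoidance) --- is sound and essentially reproduces the paper's proof, with your prime-avoidance finish replacing the paper's citation that a finite intersection of valuation overrings is a Pr\"ufer domain.
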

\begin{proof}
If $D$ is a Pr\"ufer domain with Noetherian spectrum, then $\Zar(D)\simeq\Spec(D)$ is Noetherian (see Example \ref{ex:prufnoeth}). Conversely, suppose $\Zar(D)$ is Noetherian: by Proposition \ref{prop:spec-noeth}, $\Spec(D)$ is Noetherian. Let $\mathcal{E}$ be the set of essential prime ideals of $D$: since $\Spec(D)$ is Noetherian, $\mathcal{E}$ is compact, and thus $D$ is a P$v$MD.

Suppose by contradiction that $D$ is not a Pr\"ufer domain. Then, there is a maximal ideal $M$ of $D$ such that $D_M$ is not a valuation domain; since the localization of a P$v$MD is a P$v$MD \cite[Theorem 3.11]{kang_pvmd}, and $\Zar(D_M)$ is a subspace of $\Zar(D)$, without loss of generality we can suppose $D=D_M$, i.e., we can suppose that $D$ is local.

Since $\mathcal{E}$ is compact, every $P\in\mathcal{E}$ is contained in a maximal element of $\mathcal{E}$; let $\Delta$ be the set of such maximal elements. Clearly, $D=\bigcap\{D_P\mid P\in\Delta\}$. If $\Delta$ were finite, $D$ would be an intersection of finitely many valuation domains, and thus it would be a Pr\"ufer domain \cite[Theorem 22.8]{gilmer}; hence, we can suppose that $\Delta$ is infinite. Let $I:=\bigcap\{P\mid P\in\Delta\}$.

Each $P\in\Delta$ contains a minimal prime of $I$; however, since $\Spec(D)$ is Noetherian, $I$ has only finitely many minimal primes. It follows that there is a minimal prime $Q$ of $I$ that is not contained in $\Delta$; in particular, $\bigcap\{P\mid P\in\Delta\}\subseteq Q$, and thus we can apply Proposition \ref{prop:intersez-contenuto}. Hence, $\Zar(D)$ is not Noetherian, which is a contradiction.
\end{proof}

\begin{oss}
The previous proof can be interpreted using the terminology of the theory of star operations. Indeed, any essential prime $P$ is a $t$-ideal, i.e., $P=P^t$, where (for any ideal $J$ of $D$) $J^t:=\bigcup\{(D:(D:I))\mid I\subseteq J\text{~is finitely generated}\}$ \cite[Lemma 3.17]{kang_pvmd} and if $D$ is a P$v$MD then the set $\Delta$ of the maximal elements of $\mathcal{E}$ is exactly the set of \emph{$t$-maximal ideals}, i.e., the set of the ideals $I$ such that $I=I^t$ and $J\neq J^t$ for every proper ideal $I\subsetneq J$.
\end{oss}

\begin{cor}
Let $D$ be a Krull domain. Then, $\Zar(D)$ is Noetherian if and only if $\dim(D)=1$, i.e., if and only if $D$ is a Dedekind domain.
\end{cor}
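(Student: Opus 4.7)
\medskip

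\noindent\textbf{Proof proposal.} The plan is to reduce the statement to Proposition \ref{prop:PvMD} via the well-known fact that every Krull domain is essential (indeed, a P$v$MD), so that the Noetherianity of $\Zar(D)$ is equivalent to $D$ being Pr\"ufer with Noetherian spectrum. From there the two implications follow by standard commutative-algebra facts about the interaction of the Krull and Pr\"ufer conditions.

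More precisely, I would first recall that if $D$ is a Krull domain then
\begin{equation*}
D=\bigcap_{P\in\Spec^1(D)}D_P,
\end{equation*}
each $D_P$ is a DVR, and this intersection is locally finite; in particular every height-one prime is an essential prime, and $D$ is an essential domain. Hence Proposition \ref{prop:PvMD} applies: $\Zar(D)$ is Noetherian if and only if $D$ is a Pr\"ufer domain with Noetherian spectrum.

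For the forward direction, assume $\Zar(D)$ is Noetherian, so that $D$ is Pr\"ufer. Since $D$ is both Krull and Pr\"ufer, every localization $D_M$ at a maximal ideal is simultaneously a valuation domain (Pr\"ufer) and a Krull domain; a valuation domain that is Krull is a DVR, so $\dim(D_M)\leq 1$ for every maximal $M$, hence $\dim(D)\leq 1$. Excluding the trivial case $\dim(D)=0$ (where $D$ is a field and the statement is vacuous or else one takes $D$ non-trivial), we get $\dim(D)=1$; a one-dimensional integrally closed Noetherian-like intersection of DVRs with Noetherian spectrum is Dedekind. Conversely, if $\dim(D)=1$ then a Krull domain of dimension one is automatically a Dedekind domain (one-dimensional Noetherian normal), in particular Pr\"ufer with Noetherian spectrum, so Proposition \ref{prop:PvMD} gives that $\Zar(D)$ is Noetherian; alternatively, for a Dedekind domain the center map $\gamma\colon\Zar(D)\to\Spec(D)$ is a homeomorphism (cf.\ Example \ref{ex:prufnoeth}) and $\Spec(D)$ is Noetherian.

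The only mildly delicate point is the step ``Krull $+$ Pr\"ufer $\Rightarrow$ Dedekind''; this is not itself hard, but it is the place where one must be careful to invoke the right fact (a valuation domain is Krull iff it is a DVR, or equivalently a Pr\"ufer domain whose localizations at primes are all Krull must be one-dimensional Noetherian), rather than a routine verification. Everything else is immediate from Proposition \ref{prop:PvMD} and the standard characterisation of Krull and Dedekind domains.
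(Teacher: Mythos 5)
Your proposal is correct and takes essentially the same route as the paper: reduce to Proposition \ref{prop:PvMD} by noting that every Krull domain is a P$v$MD (hence essential), and then use the fact that a Krull Pr\"ufer domain is Dedekind (equivalently, has dimension at most~$1$). The paper compresses the ``Krull $+$ Pr\"ufer $\Rightarrow \dim\leq 1$'' step into one line, but the underlying argument, including the localization-at-maximal-ideals justification you spell out, is the same.
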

\begin{proof}
If $\dim(D)=1$ then $D$ is Noetherian and so is $\Zar(D)$. If $\dim(D)>1$, then $D$ is not a Pr\"ufer domain; since each Krull domain is a P$v$MD, we can apply Proposition \ref{prop:PvMD}.
\end{proof}
Note that this corollary can also be proved directly from Corollary \ref{cor:h1} since, if $D$ is Krull, and $P\in\Spec(D)$ has height 2 or more, then $D_P$ has infinitely many height-1 primes.

\section{An application: Kronecker function rings}\label{sect:kronecker}
Let $D$ be an integrally closed integral domain with quotient field $K$. For every $V\in\Zar(D)$, let $V(X):=V[X]_{\mathfrak{m}_V[X]}\subseteq K(X)$, where $\mathfrak{m}_V$ is the maximal ideal of $V$. If $\Delta\subseteq\Zar(D)$, the \emph{Kronecker function ring} of $D$ with respect to $\Delta$ is
\begin{equation*}
\Kr(D,\Delta):=\bigcap\{V(X)\mid V\in\Delta\};
\end{equation*}
equivalently,
\begin{equation*}
\Kr(D,\Delta)=\{f/g\mid f,g\in D[X], g\neq 0, \cont(f)\subseteq(\cont(g))^{\wedge_\Delta}\},
\end{equation*}
where $\cont(f)$ is the content of $f$ and $\wedge_\Delta$ is the semistar operation defined in Section \ref{sect:semistar}. See \cite[Chapter 32]{gilmer} or \cite{fontana_loper} for general properties of Kronecker function rings.

The set of Kronecker function rings it exactly the set of overrings of the basic Kronecker function ring $\Kr(D,\Zar(D))$; this set is in bijective correspondence with the set of finite-type valuative semistar operations \cite[Remark 32.9]{gilmer}, or equivalently with the set of nonempty subsets of $\Zar(D)$ that are closed in the inverse topology \cite[Theorem 4.9]{fifolo_transactions}.

Let $\Krset(D)$ be the set of Kronecker function rings $T$ of $D$ such that $T\cap K=D$. Then, $\Krset(D)$ is in bijective correspondence with the set of finite-type valuative \emph{star} operations, or equivalently with the set of inverse-closed representation of $D$ through valuation rings, i.e., the sets $\Delta\subseteq\Zar(D)$ that are closed in the inverse topology and such that $\bigcap\{V\mid V\in\Delta\}=D$ \cite[Proposition 5.10]{olberding_affineschemes}.

It has been conjectured \cite{mcg-kronecker-pers} that $\Krset(D)$ is either a singleton (in which case $D$ is said to be a \emph{vacant domain}; see \cite{vacantdomains}) or infinite, and this has been proved to be the case for a wide class of pseudo-valuation domains \cite[Theorem 4.10]{vacantdomains}. As a consequence of the following proposition, we will prove this conjecture for another class of domains.

\begin{prop}\label{prop:Krset-comp}
Let $D$ be an integrally closed integral domain such that $1<|\Krset(D)|<\infty$. Then, there is a minimal valuation overring $V$ of $D$ such that $\Zar(D)\setminus\{V\}$ is compact.
\end{prop}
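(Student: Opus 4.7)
The plan is to work with the bijection recalled just before the statement: $\Krset(D)$ corresponds to the collection $\mathcal{R}$ of $\Delta\subseteq\Zar(D)$ that are closed in the inverse topology and satisfy $\bigcap_{W\in\Delta}W=D$. Since $\Delta$ is inverse-closed iff it is compact and $\Delta=\Delta^{\gen}$ (that is, upward closed under $\supseteq$), and since $D$ being integrally closed gives $\Zar(D)\in\mathcal{R}$, the hypothesis $1<|\Krset(D)|<\infty$ lets me pick $\Delta^*\in\mathcal{R}$ which is maximal, under $\subseteq$, among the members of $\mathcal{R}$ strictly contained in $\Zar(D)$.

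First I would show that $\Omega^*:=\Zar(D)\setminus\Delta^*$ is a singleton, via the construction
\begin{equation*}
\Delta^\dagger:=\Delta^*\cup\{V\}^{\gen}=\Delta^*\cup\{T\in\Zar(D):T\supseteq V\},
\end{equation*}
performed for a fixed but arbitrary $V\in\Omega^*$. The set $\{V\}^{\gen}$ is the Zariski-Riemann space of $V$, hence compact; it is also upward closed by construction. So $\Delta^\dagger$ is compact and upward closed, and since $V$ is a valuation ring (hence integrally closed in $K$), $\bigcap\{V\}^{\gen}=V\supseteq D$ and $\bigcap\Delta^\dagger=(\bigcap\Delta^*)\cap V=D$. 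Hence $\Delta^\dagger\in\mathcal{R}$; since $V$ witnesses $\Delta^\dagger\supsetneq\Delta^*$, maximality of $\Delta^*$ forces $\Delta^\dagger=\Zar(D)$, which translates to $\Omega^*\subseteq\{V\}^{\gen}$. Applying this for every $V\in\Omega^*$ makes any two elements of $\Omega^*$ mutually contain each other, so $|\Omega^*|=1$.

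Writing $\Omega^*=\{V\}$, the fact that $\Omega^*$ is downward closed under $\supseteq$ (as the complement of an upward-closed set) forces that no $W\subsetneq V$ can exist in $\Zar(D)$, i.e., $V\in\Zarmin(D)$; and then $\Zar(D)\setminus\{V\}=\Delta^*$ is compact because $\Delta^*\in\mathcal{R}$.

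The hard part, in my estimation, is discovering the construction $\Delta^\dagger=\Delta^*\cup\{V\}^{\gen}$ and recognising that its intersection comes out exactly to $D$ thanks to $V$ being a valuation ring. Once this construction is in hand, both the reduction to $|\Omega^*|=1$ and the identification of $V$ as a minimal valuation overring are formal consequences of maximality together with the fact that the complement of an upward-closed set is downward closed.
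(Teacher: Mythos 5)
Your proof is correct and rests on exactly the construction the paper uses, namely that $\Delta\cup\{W\}^{\gen}$ is again an inverse-closed representation of $D$. The paper applies it to an arbitrary inverse-closed representation $\Delta\neq\Zar(D)$ to show that the complement $\Lambda=\Zar(D)\setminus\Delta$ must be finite (each $W\in\Lambda$ yields a distinct element of $\Krset(D)$) and then takes $V$ minimal in $\Lambda$, whereas you start from a $\Delta^*$ chosen maximal and argue its complement is a singleton --- a minor organizational variant of the same idea.
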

\begin{proof}
Suppose $|\Krset(D)|>1$. Then, there is an inverse-closed representation $\Delta$ of $D$ different from $\Zar(D)$; let $\Lambda:=\Zar(D)\setminus\Delta$. For each $W\in\Lambda$, let $\Delta(W):=\Delta\cup\{W\}^\uparrow$; then, every $\Delta(W)$ is an inverse-closed representation of $D$, and $\Delta(W)\neq\Delta(W')$ if $W\neq W'$ (since, without loss of generality, $W\nsupseteq W'$, and thus $W\notin\Delta(W')$). Hence, each $W\in\Lambda$ give rise to a different member of $\Krset(D)$; since $|\Krset(D)|<\infty$, it follows that $\Lambda$ is finite.

If now $V$ is minimal in $\Lambda$, then $\Zar(D)\setminus\{V\}=\Delta\cup(\Lambda\setminus\{V\})$ is closed by generizations; since $\Lambda$ is finite, it follows that $\Zar(D)\setminus\{V\}$ is the union of two compact subspaces, and thus it is itself compact.
\end{proof}

\begin{cor}
Let $D$ be an integrally closed local integral domain, and suppose there exist a set $\Delta\subseteq\Spec(D)$ of incomparable nonzero prime ideals such that $\bigcap\{P\mid P\in\Delta\}=(0)$. Then, $|\Krset(D)|\in\{1,\infty\}$.
\end{cor}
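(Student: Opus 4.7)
The plan is to derive this corollary by combining Theorem \ref{teor:intersez-P} (applied with $Q=(0)$) with Proposition \ref{prop:Krset-comp} via a contradiction.

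First, I would set $Q:=(0)$ and check that the four hypotheses of Theorem \ref{teor:intersez-P} are verified for the given $\Delta$: by assumption each $P\in\Delta$ is nonzero so $Q\notin\Delta$; the primes in $\Delta$ are pairwise incomparable; the intersection of the members of $\Delta$ equals $(0)=Q$; and $D_{(0)}$ is the quotient field $K$ of $D$, which is trivially a valuation domain. The observation that makes the corollary work cleanly is that every valuation overring $V\in\Zar(D)$ has quotient field $K$, hence $V\subseteq K=D_Q$ automatically; therefore Theorem \ref{teor:intersez-P} applies to \emph{every} $V\in\Zarmin(D)$, yielding that $\Zar(D)\setminus\{V\}$ is not compact for any such $V$.

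Next, assume for contradiction that $1<|\Krset(D)|<\infty$. Since $D$ is integrally closed, Proposition \ref{prop:Krset-comp} applies and produces a minimal valuation overring $V$ of $D$ for which $\Zar(D)\setminus\{V\}$ is compact. This contradicts the conclusion of the previous step, so the assumption must fail and $|\Krset(D)|\in\{1,\infty\}$.

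There is no real obstacle: the whole argument is an assembly of two results that have already been proved. The only point that needs a brief remark is the trivial but crucial fact that $Q=(0)$ satisfies the dominance condition $V\subseteq D_Q$ for every valuation overring, which is what lets us rule out the existence of the $V$ furnished by Proposition \ref{prop:Krset-comp} in a uniform way rather than having to match it against a specific candidate.
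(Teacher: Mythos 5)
Your proof is correct and follows the same route as the paper: apply Theorem \ref{teor:intersez-P} with $Q=(0)$ to show $\Zar(D)\setminus\{V\}$ is noncompact for every $V\in\Zarmin(D)$, then conclude via Proposition \ref{prop:Krset-comp}. The only difference is that you spell out explicitly why $D_Q=K$ makes the conclusion uniform over all minimal valuation overrings, which the paper leaves implicit.
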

\begin{proof}
By Theorem \ref{teor:intersez-P}, each $\Zar(D)\setminus\{V\}$ is noncompact. The claim now follows from Proposition \ref{prop:Krset-comp}.
\end{proof}

\section{Overrings of Noetherian domains}\label{sect:Noeth}
If $D$ is a Noetherian domain, Theorem \ref{teor:Zar-meno-V} admits a direct application, without using any of the results proved in Sections \ref{sect:dimV} and \ref{sect:intersection}. Indeed, if $D$ is Noetherian with quotient field $K$, then it is the same for any localization of $D[x_1,\ldots,x_n]$, for arbitrary $x_1,\ldots,x_n\in K$; thus, the integral closure of $D[x_1,\ldots,x_n]_M$ is a Krull domain for each maximal ideal $M$ of $D[x_1,\ldots,x_n]$ (\cite[(33.10)]{nagata_localrings} or \cite[Theorem 4.10.5]{swanson_huneke}). Since a domain that is both Krull and a valuation ring must be a field or a discrete valuation ring, Theorem \ref{teor:Zar-meno-V} implies that $\Zar(D)\setminus\{V\}$ is not compact as soon as $V$ is a minimal valuation overring of dimension 2 or more.

We can actually say more than this; the following is a proof through Proposition \ref{prop:comp-Zarmin} of an observation already appeared in \cite[Example 3.7]{surveygraz}.
\begin{prop}\label{prop:noethdom}
Let $D$ be a Noetherian domain with quotient field $K$, and let $\Delta$ be the set of valuation overrings of $D$ that are Noetherian (i.e., $\Delta$ is the union of $\{K\}$ with the set of discrete valuation overrings of $D$). Then, $\Delta$ is compact if and only if $\dim(D)=1$.
\end{prop}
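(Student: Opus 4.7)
My plan is to apply Proposition \ref{prop:comp-Zarmin} to $\Delta$; this will require checking that $\Delta$ dominates $D[\insfracidfg]$ and then determining whether $\Zarmin(D)\subseteq\Delta$.

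For the implication $\dim(D)=1\Rightarrow\Delta$ compact, I would prove the stronger assertion $\Delta=\Zar(D)$. Any $V\in\Zar(D)$ is either $K$ or has a center on $D$ equal to some nonzero, hence maximal, prime $P$. In the latter case $V$ is integrally closed and contains $D_P$, so it contains the integral closure of $D_P$, which is a Dedekind domain by Krull--Akizuki. Since every valuation overring of a Dedekind domain is either $K$ or a DVR localization at a maximal ideal, $V$ itself is a DVR. Hence $\Delta=\Zar(D)$, which is compact.

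For the converse, assume $\dim(D)\geq 2$. To verify that $\Delta$ dominates $D[\insfracidfg]$, let $T=D[I]$ with $I\in\insfracidfg(D)$ and $M\in\Max(T)$; since $T$ is a finitely generated algebra over the Noetherian domain $D$, it is itself Noetherian. If $M=(0)$ then $T$ is a field and $A:=K\in\Delta$ satisfies $MA=0\neq A$. Otherwise $T_M$ is a Noetherian local domain of positive dimension and, by the classical theorem that every such ring admits a DVR overring dominating it, some $A\in\Delta$ contains $T$ with $M_A\cap T=M$, so $MA\neq A$. Next, to produce an element of $\Zarmin(D)\setminus\Delta$, I use that $\dim_v(D)\geq\dim(D)\geq 2$ furnishes some $V\in\Zar(D)$ with $\dim(V)\geq 2$; by the spectral property of $\Zar(D)$, there is $W\in\Zarmin(D)$ contained in $V$, and since $V$ is an overring of the valuation ring $W$ it equals the localization $W_P$ at some $P\in\Spec(W)$. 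Then $\dim(W)\geq ht(P)=\dim(V)\geq 2$, so $W$ is neither $K$ nor a DVR and $W\in\Zarmin(D)\setminus\Delta$. Proposition \ref{prop:comp-Zarmin} now implies that $\Delta$ is not compact.

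The main obstacle is the domination step, which relies on the classical but nontrivial fact that any Noetherian local domain of positive dimension is dominated by some DVR in its quotient field; granted this, the remaining verifications amount to routine dimension theory together with the standard correspondence between overrings of a valuation ring and its primes.
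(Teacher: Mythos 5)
Your proof is correct and follows the same overall strategy as the paper, namely applying Proposition \ref{prop:comp-Zarmin}. The one genuine difference is in how the domination hypothesis is handled: the paper bypasses an explicit check by directly citing Swanson--Huneke [Proposition 6.8.4] for the identity $I^{\wedge_\Delta}=I^b$ (the only consequence of domination that the proof of Proposition \ref{prop:comp-Zarmin} actually uses), whereas you verify domination directly using the classical theorem that a positive-dimensional Noetherian local domain admits a dominating DVR overring. The two routes rest on the same underlying fact about discrete valuations dominating Noetherian local domains; your version has the merit of invoking Proposition \ref{prop:comp-Zarmin} as stated rather than its internal argument, and of spelling out, via Krull--Akizuki, the claim $\Delta=\Zar(D)$ when $\dim(D)=1$, which the paper leaves implicit. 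Your argument that $\dim_v(D)\geq\dim(D)\geq 2$ furnishes $V$ of dimension at least $2$ and that the minimal $W\subseteq V$ has $\dim(W)\geq\dim(V)$ is also what the paper's terse phrasing is implicitly relying on.
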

\begin{proof}
If $\dim(D)=1$, then $\Delta=\Zar(D)$, and thus it is compact.

On the other hand, for every ideal $I$ of $D$, $I^{\wedge_\Delta}=I^b$ \cite[Proposition 6.8.4]{swanson_huneke}; however, if $\dim(D)>1$, then $\Zar(D)$ contains elements of dimension 2, and thus $\Delta$ cannot contain $\Zarmin(D)$. The claim now follows from Proposition \ref{prop:comp-Zarmin}.
\end{proof}

\begin{oss}
~\\
\begin{enumerate}
\item The equality $I^{\wedge_\Delta}=I^b$ holds also if we restrict $\Delta$ to be the set of discrete valuation overrings of $D$ whose center is a maximal ideal of $D$ \cite[Proposition 6.8.4]{swanson_huneke}. For each prime ideal of height 2 or more, by passing to $D_P$, we can thus prove that the set of discrete valuation overrings of $D$ with center $P$ is not compact (and in particular it is infinite).
\item The previous proposition also allows a proof of the second part of Corollary \ref{cor:FL} without using Theorem \ref{teor:intersez-P}, since $F[X,Y]$ is a Noetherian domain of dimension 2.
\end{enumerate}
\end{oss}

By Proposition \ref{prop:noethdom}, in particular, the space $\Delta$ of Noetherian valuation overrings of $D$ (where $D$ is Noetherian and $\dim(D)\geq 2$) is not a spectral space, since it is not compact. Our next purpose is to see $\Delta$ as an intersection $X\cap\Zar(D)$, for some subset $X$ of $\Over(D)$, and use this representation to prove facts about $X$. We start with using the inverse topology.
\begin{prop}\label{prop:noeth-chiusinv}
Let $D$ be a Noetherian domain with quotient field $K$, and let:
\begin{itemize}
\item $X_1$ be the set of all overrings of $D$ that are Noetherian and of dimension at most 1;
\item $X_2$ be the set of all overrings of $D$ that are Dedekind domains ($K$ included).
\end{itemize}
For $i\in\{1,2\}$, the following are equivalent:
\begin{enumerate}[(i)]
\item\label{prop:noeth-chiusinv:comp} $X_i$ is compact;
\item\label{prop:noeth-chiusinv:sp} $X_i$ is spectral;
\item\label{prop:noeth-chiusinv:cons} $X_i$ is proconstructible in $\Over(D)$; 
\item\label{prop:noeth-chiusinv:dim} $\dim(D)=1$.
\end{enumerate}
\end{prop}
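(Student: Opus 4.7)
The plan is to prove the cycle $(\mathrm{iv})\Rightarrow(\mathrm{iii})\Rightarrow(\mathrm{ii})\Rightarrow(\mathrm{i})\Rightarrow(\mathrm{iv})$. The two middle implications are immediate from the preliminaries: a proconstructible subset of a spectral space is spectral, and every spectral space is compact.

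For $(\mathrm{iv})\Rightarrow(\mathrm{iii})$, I would invoke Krull--Akizuki. When $\dim(D)=1$, every overring of $D$ is a Noetherian domain of dimension at most $1$, so $X_1=\Over(D)$ is trivially proconstructible, and $X_2$ reduces to the set of overrings of $D$ that are integrally closed in $K$. The latter is proconstructible because it is the intersection, over all relations $x^n+a_1x^{n-1}+\cdots+a_n=0$ with $x,a_1,\ldots,a_n\in K$, of the sets
\[
\bigl(\Over(D)\setminus B(a_1,\ldots,a_n)\bigr)\cup B(x);
\]
each such set is a union of two open-compact subsets of $\Over(D)$ and is therefore clopen in the constructible topology, so the intersection is constructible-closed.

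The harder implication is $(\mathrm{i})\Rightarrow(\mathrm{iv})$, and the main obstacle is the transfer of compactness from $X_i$ to its valuation part. The key observation is that both $X_1$ and $X_2$ are closed under generization in $\Over(D)$, i.e., $X_i=X_i^\gen$: for $X_1$ this is again Krull--Akizuki, applied to a member $T\in X_1$ and any overring $T'\supseteq T$, while for $X_2$ it is the classical fact that every overring of a Dedekind domain is Dedekind. By the inverse-topology characterization recalled in the preliminaries, compactness of $X_i$ combined with $X_i=X_i^\gen$ makes $X_i$ inverse-closed, hence an intersection of open-compact subsets, and in particular proconstructible in $\Over(D)$. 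Since $\Zar(D)=\bigcap_{x\in K^{*}}(B(x)\cup B(x^{-1}))$ is itself proconstructible, the intersection $\Delta:=X_i\cap\Zar(D)$ is proconstructible and thus compact. Finally, a valuation overring is Noetherian---equivalently, Dedekind---exactly when it is $K$ or a DVR, so $\Delta=\{K\}\cup\{V\in\Zar(D):V\text{ is a DVR}\}$ is precisely the set of Proposition~\ref{prop:noethdom}, whose compactness is equivalent to $\dim(D)=1$.
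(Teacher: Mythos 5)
Your proof is correct and rests on the same three pillars as the paper's: the closure-under-generization facts $X_i=X_i^{\gen}$ (Krull--Akizuki for $X_1$, overrings of Dedekind domains for $X_2$), the characterization of inverse-closed subsets as the compact, generization-closed ones, and Proposition~\ref{prop:noethdom} applied to $X_i\cap\Zar(D)$. The only genuine variation is in the easy direction: you prove $(\mathrm{iv})\Rightarrow(\mathrm{iii})$ directly, showing that for $\dim(D)=1$ the set $X_2$ equals the set of integrally closed overrings and exhibiting it as an intersection of constructible sets; the paper instead proves $(\mathrm{iv})\Rightarrow(\mathrm{i})$ by noting that $X_1=\Over(D)$ and $X_2=\Over(D')$ each have a minimum element (hence are compact), then closes the cycle via $(\mathrm{i})\Rightarrow(\mathrm{iii})$. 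Your route is a touch more laborious but yields as a by-product the observation that the integrally closed overrings always form a proconstructible set. One small wording slip: the set $\bigl(\Over(D)\setminus B(a_1,\ldots,a_n)\bigr)\cup B(x)$ is not a ``union of two open-compact subsets''---the first term is the complement of an open-compact set, not open-compact itself---but both pieces are clopen in the constructible topology, so your conclusion that the union is patch-clopen, and hence the intersection is proconstructible, stands.
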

\begin{proof}
\ref{prop:noeth-chiusinv:comp} $\Longrightarrow$ \ref{prop:noeth-chiusinv:cons}. In both cases, $X=X^{\gen}$: for $X_1$ see \cite[Theorem 93]{kaplansky}, while for $X_2$ see e.g. \cite[Theorem 40.1]{gilmer} (or use the previous result and \cite[Corollary 36.3]{gilmer}). \ref{prop:noeth-chiusinv:cons} $\Longrightarrow$ \ref{prop:noeth-chiusinv:sp} $\Longrightarrow$ \ref{prop:noeth-chiusinv:comp} always holds.

\ref{prop:noeth-chiusinv:dim} $\Longrightarrow$ \ref{prop:noeth-chiusinv:comp}. If $\dim(D)=1$, then $X_1=\Over(D)$, while $X_2=\Over(D')$, where $D'$ is the integral closure of $D$, and both are compact since they have a minimum.

\ref{prop:noeth-chiusinv:cons} $\Longrightarrow$ \ref{prop:noeth-chiusinv:dim}. If $X_i$ is proconstructible, so is $X_i\cap\Zar(D)$ (since $\Zar(D)$ is also proconstructible), and in particular $X_i\cap\Zar(D)$ is compact. However, in both cases, $X_i\cap\Zar(D)$ is exactly the set of Noetherian valuation overrings of $D$; by Proposition \ref{prop:noethdom}, $\dim(D)=1$.
\end{proof}

\begin{oss}
The equivalence between the first three conditions of Proposition \ref{prop:noeth-chiusinv} holds for every subset $X\subseteq\Over(D)$ such that $X=X^\gen$ (and every domain $D$). In particular, it holds if $X$ is the set of overrings of $D$ that are principal ideal domains, and, with the same proof of the other cases, we can show that if $D$ is Noetherian and these conditions hold, then $\dim(D)=1$. However, it is not clear if, when $D$ is Noetherian and $\dim(D)=1$, this set is actually compact.
\end{oss}

Another immediate consequence of Proposition \ref{prop:noethdom} is that the set $\NoethOver(D)$ of Noetherian overrings of $D$ is not proconstructible as soon as $D$ is Noetherian and $\dim(D)\geq 2$: indeed, if it were, then $\NoethOver(D)\cap\Zar(D)=\Delta$ would be proconstructible, against the fact that $\Delta$ is not compact. However, this is also a consequence of a more general result. We need a topological lemma.
\begin{lemma}\label{lemma:YX-cons}
Let $Y\subseteq X$ be spectral spaces. Suppose that there is a subbasis $\mathcal{B}$ of $X$ such that, for every $B\in\mathcal{B}$, both $B$ and $B\cap Y$ are compact. Then, $Y$ is a proconstructible subset of $X$.
\end{lemma}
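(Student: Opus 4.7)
The plan is to show that the inclusion $\iota\colon Y\hookrightarrow X$ remains continuous when both spaces carry their constructible topologies; since $Y$ with its constructible topology is compact (because $Y$ is itself spectral) and the constructible topology on $X$ is Hausdorff, the image $\iota(Y)=Y$ will then automatically be closed in $X_{\mathrm{cons}}$, which is the very definition of $Y$ being proconstructible in $X$.

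The subbasic open sets of $X_{\mathrm{cons}}$ are the compact opens of $X$ together with their complements, so continuity of $\iota$ reduces to showing that $U\cap Y$ is compact open in $Y$ for \emph{every} compact open $U$ of $X$. The hypothesis only guarantees this for $U\in\mathcal{B}$, so the heart of the argument is a bootstrap from the subbasis $\mathcal{B}$ to all compact opens. For any $B_1,\dots,B_n\in\mathcal{B}$, each $B_i\cap Y$ is open in $Y$ and compact by hypothesis, hence compact open in the spectral space $Y$; since in any spectral space the collection of compact opens is closed under finite intersection, $(B_1\cap\cdots\cap B_n)\cap Y=(B_1\cap Y)\cap\cdots\cap(B_n\cap Y)$ is compact open in $Y$. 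Now any compact open $U$ of $X$ is open, hence a union of finite intersections of elements of $\mathcal{B}$, and by its own compactness this can be taken to be a finite union; intersecting with $Y$ then writes $U\cap Y$ as a finite union of compact opens of $Y$, and so $U\cap Y$ is itself compact open in $Y$.

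With this in hand, for every compact open $U$ of $X$ the preimage $\iota^{-1}(U)=U\cap Y$ is clopen in $Y_{\mathrm{cons}}$, and so is $\iota^{-1}(X\setminus U)=Y\setminus(U\cap Y)$; hence $\iota$ is continuous into $X_{\mathrm{cons}}$. Compactness of $Y_{\mathrm{cons}}$ and Hausdorffness of $X_{\mathrm{cons}}$ then force $Y$ to be closed in $X_{\mathrm{cons}}$, proving the lemma. The only real obstacle is the bootstrap step: one has to combine the subbasis character of $\mathcal{B}$ with the fact that compact opens in a spectral space form a basis closed under finite intersections, in order to propagate the compactness assumption from $B\cap Y$ (for $B\in\mathcal{B}$) to $U\cap Y$ for all compact opens $U$ of $X$.
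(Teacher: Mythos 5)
Your proof is correct and follows essentially the same route as the paper: the paper's entire proof is the observation that the hypothesis forces the inclusion $Y\hookrightarrow X$ to be a spectral map, after which it cites \cite[1.9.5(vii)]{EGA4-1} for the conclusion. You have simply unpacked both halves of that one-line argument — the ``bootstrap'' from the subbasis $\mathcal{B}$ to all compact opens (the content of ``the hypothesis implies the inclusion is spectral'') and the standard fact that a spectral map is continuous for the constructible topologies, so that compactness of $Y_{\mathrm{cons}}$ and Hausdorffness of $X_{\mathrm{cons}}$ force $Y$ to be closed in $X_{\mathrm{cons}}$ (the content of the EGA citation).
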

\begin{proof}
The hypothesis on $\mathcal{B}$ implies that the inclusion map $Y\hookrightarrow X$ is a spectral map; by \cite[1.9.5(vii)]{EGA4-1}, it follows that $Y$ is a proconstructible subset of $X$.
\end{proof}

\begin{prop}\label{prop:denso-costr}
Let $D$ be an integral domain with quotient field $K$, and let $D[\insfracidfg]$ be the set of finitely generated $D$-algebras contained in $K$.
\begin{enumerate}[(a)]
\item\label{prop:denso-costr:alg} $D[\insfracidfg]$ is dense in $\Over(D)$, with respect to the inverse topology.
\item\label{prop:denso-costr:sp} Let $X$ such that $D[\insfracidfg]\subseteq X\subseteq\Over(D)$. Then, $X$ is spectral in the Zariski topology if and only if $X=\Over(D)$.
\end{enumerate} 
\end{prop}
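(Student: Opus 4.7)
For part (a), the plan is to notice that $D$ itself lies in $D[\insfracidfg]$: since the unit fractional ideal $(1)D = D$ is finitely generated, $D = D[D] \in D[\insfracidfg]$. Since $D \subseteq T$ for every $T \in \Over(D)$, the closure of $\{D\}$ in the inverse topology equals $\{T \mid D \subseteq T\} = \Over(D)$, so $D$ is a generic point for the inverse topology; hence already $\{D\}$, and \emph{a fortiori} $D[\insfracidfg]$, is inverse-dense.

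For part (b), the direction ``$X = \Over(D)$ implies $X$ is spectral'' is immediate. For the converse, I assume $X$ is spectral in the subspace topology and plan to apply Lemma \ref{lemma:YX-cons} with $Y = X$, ambient space $\Over(D)$, and subbasis $\mathcal{B} = \{B(x) \mid x \in K\}$, concluding that $X$ is proconstructible. Each $B(x)$ is quasi-compact; the key step is to show $B(x) \cap X$ is quasi-compact too. Here the hypothesis $D[\insfracidfg] \subseteq X$ enters crucially: since $D[x] \in X$ and $D[x]$ is the minimum element of $B(x) = \Over(D[x])$ under inclusion, any basic open $B(\vec y)$ containing $D[x]$ satisfies $\vec y \subseteq D[x]$ and therefore $B(\vec y) \supseteq B(x)$. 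Thus any open subset of $\Over(D)$ containing $D[x]$ already covers $B(x)$, and in particular every open cover of $B(x) \cap X$ admits a single-element subcover.

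Once $X$ is known to be proconstructible, I will conclude by strengthening part (a) to density in the constructible topology. A nonempty basic constructible open of $\Over(D)$ has the form $U = B(\vec x) \cap B(\vec y_1)^c \cap \cdots \cap B(\vec y_m)^c$; given $T \in U$, the ring $A := D[\vec x] \in D[\insfracidfg]$ is contained in $T$, so any component of $\vec y_j$ absent from $T$ is absent from $A$, giving $A \in B(\vec y_j)^c$; also $A \in B(\vec x)$ by construction, hence $A \in U$. Therefore $D[\insfracidfg]$ is dense in the constructible topology, and since $X$ is constructible-closed and contains $D[\insfracidfg]$, we obtain $X = \Over(D)$. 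The most delicate part of the argument is the quasi-compactness of $B(x) \cap X$, but it actually holds for any subset of $\Over(D)$ containing $D[x]$, with no use of spectrality at all; the spectrality hypothesis is consumed solely to make Lemma \ref{lemma:YX-cons} applicable.
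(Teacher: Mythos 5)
Your argument is correct, and for part (b) it follows essentially the same skeleton as the paper's: apply Lemma~\ref{lemma:YX-cons} (with the subbasis $\{B(x)\mid x\in K\}$ instead of the basis of all $B(x_1,\ldots,x_n)$, an inconsequential change) to deduce that $X$ is proconstructible, then finish by constructible density of $D[\insfracidfg]$.

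Where you diverge is part (a), and what you find there is worth noting. You observe, correctly, that $D\in D[\insfracidfg]$ is the minimum of $\Over(D)$, hence its inverse-closure $\{D\}^{\gen}=\{T\mid T\supseteq D\}$ is all of $\Over(D)$; so $\{D\}$ alone is inverse-dense, and (a) as literally stated is essentially a triviality. The paper's own proof of (a) in fact establishes a strictly stronger fact, namely that $D[\insfracidfg]$ is dense in the \emph{constructible} topology (which implies inverse-density, since the constructible topology is finer); and it is that stronger constructible density, not the stated inverse-density, that the paper's proof of (b) invokes when it says ``by the previous point.'' You detected this gap between statement and use, and compensated by redoing the constructible-density argument inside (b) (via the basic patch-opens $B(\vec x)\cap B(\vec y_1)^c\cap\cdots\cap B(\vec y_m)^c$), which is precisely the computation the paper runs under the heading of (a). So the content is the same, merely reorganized: the paper front-loads the constructible-density argument as its proof of (a), while you prove the literal statement of (a) trivially and then supply constructible density where it is actually consumed. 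One stylistic simplification: your compactness argument for $B(x)\cap X$ via ``every open cover admits a single-element subcover'' can be replaced by citing the fact the paper records explicitly, that any subset of $\Over(D)$ possessing a minimum element is compact; $D[x]$ is the minimum of $B(x)\cap X$ since $D[x]\in D[\insfracidfg]\subseteq X$.
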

\begin{proof}
\ref{prop:denso-costr:alg} A basis of the constructible topology is given by the sets of type $U\cap(X\setminus V)$, as $U$ and $V$ ranges in the open and compact subsets of $\Over(D)$. Such an $U$ can be written as $B_1\cup\cdots\cup B_n$, where each $B_i=B(x_1^{(i)},\ldots,x_n^{(i)})$ is a basic open set of $\Over(D)$; thus, we can suppose that $U=B(x_1,\ldots,x_n)$. Suppose $\Omega:=U\cap(X\setminus V)$ is nonempty; we claim that $A:=D[x_1,\ldots,x_n]\in\Omega\cap D[\insfracidfg]$. Clearly $A\in D[\insfracidfg]$ and $A\in U$; let $T\in\Omega$. Then, $T\in U$, and thus $A\subseteq T$; therefore, $A$ is in the closure $\Cl(T)$ of $T$, with respect to the Zariski topology. But $X\setminus V$ is closed, and thus $\Cl(T)\subseteq X\setminus V$; i.e., $A\in X\setminus V$. Hence, $A\in\Omega\cap D[\insfracidfg]$, which in particular is nonempty, and $D[\insfracidfg]$ is dense.

\ref{prop:denso-costr:sp} Suppose $X$ is spectral. For every $x_1,\ldots,x_n$, the set $X\cap B(x_1,\ldots,x_n)$ has a minimum (i.e., $D[x_1\ldots,x_n]$), so it is compact. Since the family of all $B(x_1,\ldots,x_n)$ is a basis, by Lemma \ref{lemma:YX-cons} it follows that $X$ is proconstructible. By the previous point, we must have $X=\Over(D)$.
\end{proof}

\begin{cor}
Let $D$ be a Noetherian domain. The spaces
\begin{itemize}
\item $\NoethOver(D):=\{T\in\Over(D)\mid T$ is Noetherian$\}$, and
\item $\KrullOver(D):=\{T\in\Over(D)\mid T$ is a Krull domain$\}$
\end{itemize}
are spectral if and only if $\dim(D)=1$.
\end{cor}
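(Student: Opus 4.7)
The plan is to split on $\dim(D)$. When $\dim(D)=1$, the Krull--Akizuki theorem gives that every overring of $D$ is Noetherian, so $\NoethOver(D)=\Over(D)$ is spectral; moreover, since every overring is then of dimension at most $1$, the Krull overrings are exactly the integrally closed ones, that is, the Dedekind overrings together with $K$. Hence $\KrullOver(D)$ coincides with the set $X_2$ of Proposition \ref{prop:noeth-chiusinv}, which is spectral when $\dim(D)=1$.

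For the converse, assume $\dim(D)\geq 2$. For $\NoethOver(D)$ I would apply Proposition \ref{prop:denso-costr}(b): by Hilbert's basis theorem $D[\insfracidfg]\subseteq\NoethOver(D)$, while $\Zar(D)$ contains valuation rings of dimension $\geq 2$ (since $\dim_v(D)\geq\dim(D)\geq 2$), which are non-Noetherian overrings, so $\NoethOver(D)\subsetneq\Over(D)$ and the cited proposition yields non-spectrality. For $\KrullOver(D)$ the same strategy does not apply, since a Noetherian $D$-algebra $D[x_1,\ldots,x_n]$ need not be integrally closed and hence need not be Krull; this is the main obstacle, and I would get around it by substituting integral closures.

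Concretely, for any $x_1,\ldots,x_n\in K$, the integral closure $A'$ of $A:=D[x_1,\ldots,x_n]$ in $K$ is a Krull domain by the Mori--Nagata theorem (as $A$ is Noetherian) and is the minimum of $B(x_1,\ldots,x_n)\cap\KrullOver(D)$: any Krull overring containing $x_1,\ldots,x_n$ must contain $A$ and, being integrally closed in its quotient field $K$, must contain $A'$. Hence each $B(x_1,\ldots,x_n)\cap\KrullOver(D)$ is compact, so if $\KrullOver(D)$ were spectral, Lemma \ref{lemma:YX-cons} would force it to be proconstructible in $\Over(D)$. Intersecting with the proconstructible subspace $\Zar(D)$, the set $\KrullOver(D)\cap\Zar(D)$ would then be compact; but a valuation ring is Krull exactly when it is a DVR or a field, so this intersection is precisely the set of Noetherian valuation overrings of $D$, which by Proposition \ref{prop:noethdom} is compact only when $\dim(D)=1$, a contradiction.
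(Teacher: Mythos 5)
Your proof is correct and follows the paper's approach. For $\dim(D)=1$ you identify $\NoethOver(D)=\Over(D)$ via Krull--Akizuki and $\KrullOver(D)=X_2$ exactly as the paper's appeal to Proposition \ref{prop:noeth-chiusinv} intends, and for $\dim(D)\ge 2$ your treatment of $\NoethOver(D)$ via Proposition \ref{prop:denso-costr}\ref{prop:denso-costr:sp} and the Hilbert Basis Theorem is the paper's. Where the paper is terse is $\KrullOver(D)$: it only says this ``follows in the same way'' because $B(x_1,\ldots,x_n)\cap\KrullOver(D)$ has a minimum (the integral closure of $D[x_1,\ldots,x_n]$, Krull by Mori--Nagata), but Proposition \ref{prop:denso-costr}\ref{prop:denso-costr:sp} cannot be quoted verbatim since $D[\insfracidfg]\not\subseteq\KrullOver(D)$. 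You correctly recognize this obstacle and supply the missing step: having a minimum in each basic intersection feeds Lemma \ref{lemma:YX-cons} to give proconstructibility, and then intersecting with the proconstructible $\Zar(D)$ lands you in Proposition \ref{prop:noethdom}, which forces $\dim(D)=1$. This is a clean and valid completion of the passage the paper compresses.
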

\begin{proof}
If $\dim(D)=1$, then the claim follows by Proposition \ref{prop:noeth-chiusinv}.

If $\dim(D)\geq 2$, then $\NoethOver(D)$ is not spectral by Proposition \ref{prop:denso-costr}\ref{prop:denso-costr:sp} and the Hilbert Basis Theorem; the case of $\KrullOver(D)$ follows in the same way, since $\KrullOver(D)\cap B(x_1,\ldots,x_n)$ has always a minimum (i.e., the integral closure of $D[x_1,\ldots,x_n]$).
\end{proof}

More generally, consider a property $\mathcal{P}$ of Noetherian domains such that every field and every discrete valuation ring satisfies $\mathcal{P}$; for example, $\mathcal{P}$ may be the property of being regular, Gorenstein or Cohen-Macaulay. Let $X_\mathcal{P}(D)$ be the set of overrings of $D$ satisying $\mathcal{P}$; then, $X_\mathcal{P}(D)\cap\Zar(D)$ is not compact, and thus $X_\mathcal{P}(D)$ is not proconstructible. On the other hand, if $X_\mathcal{P}(T)$ is compact for every overring of $D$ that is finitely generated as a $D$-algebra, then by Lemma \ref{lemma:YX-cons} it follows that $X_\mathcal{P}(D)$ cannot be a spectral space. Thus, the assignment $D\mapsto X_\mathcal{P}(D)$ cannot be ``too good'': either some $X_\mathcal{P}(T)$ is not compact, or $X_\mathcal{P}(D)$ is not spectral.

\vspace{5mm}

\textbf{Question.} Let $\mathcal{P}$ be the property of being regular, the property of being Gorenstein or the property of being Cohen-Macaulay. Is it possible to characterize for which Noetherian domains $D$ there is a $T\in\Over(D)$ such that $X_\mathcal{P}(T)$ is not compact and for which $X_\mathcal{P}(D)$ is not spectral?

\section{Acknowledgments}
Section \ref{sect:kronecker} was inspired by a talk given by Daniel McGregor at the congress ``Recent Advances in Commutative ring and Module Theory'' in Bressanone (June 13-17, 2017). I also thank the referee for his numerous suggestions, which improved the paper.

\end{document}